\documentclass[11pt]{article}

\usepackage[margin=1in]{geometry}
\usepackage{amsmath,amssymb,amsthm,mathtools}
\usepackage{enumitem}
\usepackage{hyperref}
\usepackage{microtype}
\usepackage{authblk}
\hypersetup{colorlinks=true,linkcolor=blue,citecolor=blue,urlcolor=blue}

\newtheorem{theorem}{Theorem}[section]
\newtheorem{lemma}[theorem]{Lemma}
\newtheorem{proposition}[theorem]{Proposition}
\newtheorem{corollary}[theorem]{Corollary}
\newtheorem{definition}[theorem]{Definition}
\newtheorem{remark}[theorem]{Remark}
\newtheorem{problem}[theorem]{Problem}
\newtheorem{conjecture}{Conjecture}

\DeclareMathOperator{\odd}{odd}

\title{Matchings and Near-Optimal 2-Factor Packings\\
in Percolated Vertex-Transitive Graphs}

\author[1]{Mengyu Cao\thanks{E-mail: \texttt{myucao@ruc.edu.cn}. Supported by the National Natural Science Foundation of China (12301431) and Beijing Natural Science Foundation (1262010).}}
\author[2]{Mei Lu\thanks{E-mail: \texttt{lumei@tsinghua.edu.cn}. Supported by the National Natural Science Foundation of China (Grant 12571372) and Beijing Natural Science Foundation (Grant 1262010).}}
\author[2]{Xiamiao Zhao\thanks{Corresponding author. E-mail: \texttt{zxm23@mails.tsinghua.edu.cn}}}

\affil[1]{\small Institute for Mathematical Sciences, Renmin University of China, Beijing 100086, China}
\affil[2]{\small Department of Mathematical Sciences, Tsinghua University, Beijing 100084, China}

\date{}

\begin{document}
\maketitle

\begin{abstract}
Let $G$ be a connected simple vertex-transitive graph on $n$ vertices
with degree $d$, and let $G_p$ be the random spanning subgraph obtained
by retaining each edge of $G$ independently with probability $p$.  Put
$q:=1-p$.  Motivated by a conjecture of Bedert, Dragani\'c, M\"uyesser,
and Pavez-Sign\'e on Hamilton cycles in percolated Cayley graphs, we
establish the corresponding matching and $2$-factor statements uniformly
over the larger class of all connected vertex-transitive host graphs.
For every $A>0$, if $q^d\le n^{-(5A+250)},$ then, with probability at least $1-n^{-A}$, the graph $G_p$ has a perfect
matching when $n$ is even and is factor-critical when $n$ is odd.
Separately, if $0<\epsilon<1$ and
$
\epsilon^2pd\ge64(A+6)\log(2n),
$
then, with probability at least $1-n^{-A}$, the graph $G_p$ contains at
least
\[
\left\lfloor\frac{(1-\epsilon)pd}{2}\right\rfloor
\]
pairwise edge-disjoint spanning $2$-factors.  Moreover, if
$pd/\log n\to\infty$, then
\[
\nu_2(G_p)=(1+o(1))\frac{pd}{2}
\]
with high probability, which is asymptotically optimal, where $\nu_2(G)$ is the maximum number of pairwise edge-disjoint spanning
2-factors in $G$.  Thus
logarithmic-order percolation already forces these two factor-theoretic
consequences of Hamiltonicity beyond the Cayley setting.
\end{abstract}

\medskip
\noindent\textbf{Keywords.} Random subgraph, vertex-transitive graph,
  factor-critical graph, $2$-factor packing.

\noindent\textbf{MSC classification.}
05C70, 05C80, 05C38, 05E18.

\section{Introduction}

A graph $G$ is \emph{vertex-transitive} if, for every $u,v\in V(G)$, there
is an automorphism $\varphi$ of $G$ such that $\varphi(u)=v$.  This class
contains Cayley graphs, Kneser graphs, and many sparse regular graphs.  The
following classical conjecture asks for a spanning path in every connected
vertex-transitive graph.

\begin{conjecture}[Lov\'asz~\cite{LovaszProblem}]\label{conj:lovasz}
Every connected vertex-transitive graph has a Hamilton path.
\end{conjecture}

A stronger version, often attributed to Thomassen, asserts that every
sufficiently large connected vertex-transitive graph has a Hamilton cycle.
Christofides, Hladk\`y, and M\'ath\'e~\cite{ChristofidesEtAl} proved this for
graphs of order $n$ and degree at least $\eta n$, for every fixed $\eta>0$.
More recently, Buci\'c, Christoph, Pokrovskiy, and
Steiner~\cite{BucicEtAl} proved that every connected vertex-transitive graph
of order $n$ contains a cycle of length at least $n^{2/3-o(1)}$.

An older special case concerns Cayley graphs.  Let $\Gamma$ be a finite
group and  $S\subseteq\Gamma\setminus\{1\}$ with $S=S^{-1}$.  The
Cayley graph $\operatorname{Cay}(\Gamma,S)$ has vertex set $\Gamma$ and
edges $\{g,gs\}$ for $g\in\Gamma$ and $s\in S$.  It is connected exactly
when $S$ generates $\Gamma$.

\begin{conjecture}[Rapaport-Strasser~\cite{RapaportStrasser}]
Every connected Cayley graph on a finite group with at least three elements
has a Hamilton cycle.
\end{conjecture}

Chen and Quimpo~\cite{ChenQuimpo} proved the conjecture for finite abelian
groups.  Christofides, Hladk\`y, and M\'ath\'e~\cite{ChristofidesEtAl} proved
it for all sufficiently large connected Cayley graphs of degree at least
$\eta n$.  Bedert, Dragani\'c, M\"uyesser, and
Pavez-Sign\'e~\cite{BedertEtAl} recently lowered the degree condition to
$d>n^{1-c}$ for an absolute constant $c>0$.

We study a random version of this problem.  For a graph $G$ and
$p\in[0,1]$, let $G_p$ be the random spanning subgraph obtained by retaining
each edge of $G$ independently with probability $p$.  Put $q:=1-p$.  If $G$ is
$d$-regular, then a fixed vertex is isolated in $G_p$ with probability
$q^d$.  Thus $q^d$ is the exact local deletion parameter, and
$pd\asymp\log n$ is the corresponding scale when $p$ is small.

Bedert, Dragani\'c, M\"uyesser, and Pavez-Sign\'e~\cite{BedertEtAl} proposed
the following random analogue.

\begin{conjecture}\label{conj:random-hamilton}
There is an absolute constant $C$ such that, for every connected Cayley
graph $G$ of order $n$ and degree $d$, the condition
\[
        p\ge C\frac{\log n}{d}
\]
implies that $G_p$ has a Hamilton cycle with high probability.
\end{conjecture}

Brada\v{c} and Janzer~\cite{BradacJanzer} proved this conjecture under the
stronger condition
$pd\ge ((n/d)\log n)^C$ for a sufficiently large absolute constant $C$.
The full conjecture remains open.

A Hamilton cycle contains two simpler spanning factors.  It contains a
matching of size $\lfloor n/2\rfloor$.  When $n$ is odd, deletion of any one
vertex from the cycle leaves the graph with a perfect matching.  It is also a
spanning $2$-factor.  These two consequences retain different parts of the
Hamilton-cycle problem.  Matching is controlled by parity barriers, while a
$2$-factor prescribes degree two at every vertex but does not require its
cycles to merge into one component.  Bedert, Dragani\'c, M\"uyesser, and
Pavez-Sign\'e explicitly observed that even the perfect-matching consequence
of Conjecture~\ref{conj:random-hamilton} is of independent interest.

There are two closely related points of comparison.  In the deterministic
matching-preclusion problem, Li, He, and Zhang~\cite{LiHeZhang} proved that
every connected $d$-regular vertex-transitive graph of even order has
matching-preclusion number $d$.  They also classified the exceptional graphs
into six explicit classes for which an optimal preclusion set need not be
the star of a vertex.  Thus
the minimum deterministic obstruction is completely understood, but a
random theorem must additionally count the near-minimum obstructions.
For arbitrary regular bipartite host graphs, Glebov, Luria, and
Simkin~\cite{GlebovLuriaSimkin} proved that, for a $k$-regular bipartite
graph with two parts of order $s$, the perfect-matching and
no-isolated-vertex hitting times coincide asymptotically when
$k=\omega(s/\log^{1/3}s)$.  They also constructed infinitely many regular
bipartite hosts with
$k=\Omega(s/(\log s\log\log s))$ for which isolated vertices disappear
around $p=\log s/k$, while no perfect matching exists with high probability
even at $p=2\log s/k$.  Their examples show that
regularity alone does not support the conclusion below; the transitive
structure of the host is essential.

We settle both factor-theoretic consequences, in a stronger form and for
all connected vertex-transitive graphs.  A graph of even order is
\emph{parity-perfect} if it has a perfect matching.  A graph of odd order is
\emph{parity-perfect} if it is factor-critical, meaning that deleting any vertex
leaves the graph with a perfect matching.  Our first result is uniform in an
auxiliary ambient parameter $N$, a form needed for induction through
vertex-transitive quotients.

\begin{theorem}\label{thm:main-poly}
Let $A>0$, $p\in[0,1]$ and $q=1-p$.  Let $G$ be a connected simple vertex-transitive graph of order
$n\ge2$ and degree $d$.  Let $N\ge n$.  If
\[
        q^d\le N^{-(5A+250)},
\]
then
\[
        \mathbb P(G_p\text{ is parity-perfect})\ge 1-N^{-A}.
\]
\end{theorem}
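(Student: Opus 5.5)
We use the Tutte–Gallai characterization.  A graph $H$ on $n$ vertices is parity-perfect exactly when $\odd(H-S)\le |S|$ for every nonempty $S\subseteq V(H)$ and, in addition, $H$ itself has at most one odd component when $S=\emptyset$.  For odd $n$ we instead require $\odd(H-S)\le|S|-1$ for all nonempty $S$.  Indeed, factor-criticality is equivalent to: for every $v$ and every $S\ni v$, $\odd(H-S)\le |S|-1$.  So failure of parity-perfection in $G_p$ yields a set $S$ and a collection of odd components $C_1,\dots,C_m$ of $G_p-S$ with $m\ge |S|+\delta$, where $\delta\in\{0,1\}$ depends on the parity of $n$ (the case $S=\emptyset$ is a disconnection event).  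The plan is to bound the probability of all such configurations by a union bound, splitting by the sizes of the components.

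\textbf{Step 1: small components.}  Call a component \emph{small} if $|C_i|\le \sqrt n$ or some fixed threshold, and \emph{large} otherwise.  For a set $C$ of size $k$ in a $d$-regular vertex-transitive connected graph, we want an edge-isoperimetric bound $e_G(C,V\setminus C)\ge c\,k\,d'$, where $d'\approx d$ when $k\le d$ (for instance $e(C,\bar C)\ge k(d-k+1)$).  For larger $k$ we want a bound that still supplies of order $d\cdot k^{\Theta(1)}$ boundary edges.  A standard tool is the fact that connected vertex-transitive graphs have vertex connectivity at least $2(d+1)/3$ (Watkins/Mader) and edge connectivity $d$.  In a barrier configuration, each $C_i$ has all its $G$-boundary edges outside $S$ deleted.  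Counting connected sets of size $k$ containing a given vertex, at most $(ed)^{k}$ of them, the union bound gives roughly
\[
\sum_{S,\{C_i\}} \prod_i q^{e_G(C_i,V\setminus (C_i\cup S))}.
\]
The edges to $S$ are the danger.  Since $m>|S|$, averaging shows that most small components send few edges into $S$ relative to their boundary, provided $|S|$ is small compared with $d$.  When $|S|$ is comparable to $d$, the $m\ge|S|$ components together still lose $\gtrsim m\cdot d/2$ boundary edges.  The exponent $5A+250$ should absorb the entropy factors $N^{O(1)}$ per component and over the choice of $S$.

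\textbf{Step 2: large components, via quotient induction.}  If some $C_i$ are large, or the configuration is a near-disconnection into big pieces, isoperimetry alone does not beat the entropy $\binom{n}{|C|}$.  Here I would use the paper's induction through vertex-transitive quotients, which is why $N\ge n$ is uniform.  Either $G$ has good expansion (boundary at least $\Omega(d\cdot \min(|C|,n-|C|)^{\alpha})$), in which case Step 1's counting extends.  Or $G$ has a block system, i.e.\ an automorphism-invariant partition into blocks that each induce a smaller connected vertex-transitive graph $B$, with quotient $G/\mathcal B$ also vertex-transitive.  In the latter case apply the theorem inductively with the same $N$ to each block, using $q^{\deg_B}$.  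The relevant condition is $q^{d_B}\le N^{-(5A+250)}$ after adjusting, or else the inter-block degree is large and the quotient handles it.  Factor-critical blocks together with connectivity between blocks then assemble into a parity-perfect whole.  The standard fact used is that gluing factor-critical graphs along a connected structure of edges preserves near-perfect/factor-critical matchings, plus a perfect matching of even blocks.  A union bound over the at most $n$ blocks gives failure probability at most $n\cdot N^{-A-1}\le N^{-A}$.

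\textbf{Main obstacle.}  The hardest step is the dichotomy in Step 2.  It requires a structural lemma saying that a connected vertex-transitive graph either has strong edge expansion for mid-sized sets or admits a block system whose blocks carry most of the degree; this is the Cayley-free analogue of the approximate-subgroup arguments.  I would first try to derive it from the classical fact that the atoms (minimal fragments) of a vertex-transitive graph form an imprimitivity system, using them as blocks.  The exponent losses from iterating this, together with the entropy factors, are what produce the constant $250$.
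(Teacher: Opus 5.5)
Your plan has genuine gaps in both steps. In Step~1 the union bound pays $\binom{n}{|S|}$ for the barrier and $(ed)^k$ for each component. You expect this to be paid for by isoperimetry of order $d\,k^{\Theta(1)}$ and by the claim that the $m\ge|S|$ components lose $\gtrsim md/2$ edges. Neither holds. Summing boundaries only yields $t\ge(m-|S|)d/2+e_G(S)$ for the number $t$ of deleted edges between components. With $m=|S|+2-\tau$ (where $\tau\equiv n\pmod 2$) this is just $t\ge(2-\tau)d/2+e_G(S)$. Note also that your displayed weight counts each such edge twice; the correct weight is $q^t$. The bound is sharp even when $m$ is huge. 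In the $d$-dimensional hypercube with colour classes $L,R$, deleting the $d$ edges at one $x\in L$ makes $S=L\setminus\{x\}$ a barrier with $n/2+1$ singleton components, at cost only $q^d$. So when $S$ is independent, nothing pays for its choice. Likewise, in the Cartesian product $C\square K_{d-1}$ with $C$ a long cycle, unions of consecutive copies of $K_{d-1}$ have boundary exactly $2d-2$ at every size. So the isoperimetry you want does not exist, and counting components as connected sets is exponentially lossy.

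The paper's count rests on three ingredients absent from your plan. First, it sums over minimal parity-preclusion sets $F$ in layers $ad/2\le|F|<(a+1)d/2$, and fixes an inclusion-maximal barrier $U$, so that all components are odd and $F$ is exactly the set of edges between distinct components. Second, it counts the non-singleton components (at most $2a+1$, by irreducibility) by cut size alone via Karger's bound, which is blind to the sizes of the sets. Third, after contracting them, it views $(U,\text{components})$ as a two-colouring with fewer than $(a+1)d$ monochromatic edges. There are only $n^{O(a)}$ such colourings: some tree in a packing of $\lfloor d/2\rfloor$ edge-disjoint spanning trees has at most $3(a+1)$ monochromatic edges, and these determine the colouring up to reversal. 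This last count is what controls large barriers.

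In Step~2 both the dichotomy and the assembly are off. The paper splits by restricted edge-connectivity, not expansion. In the first case, every connected $X$ with $2\le|X|\le n-2$ has $|\partial_GX|\ge2d-2$, and the counting above suffices. Large sets with boundary $2d-2$ are harmless there, since there are fewer than $n^4$ cuts of size at most $2d$. In the second case, a minimum restricted cut with smallest side is a block of imprimitivity (your atom instinct, applied to restricted edge cuts). Then every vertex has exactly one external neighbour, and each block $B$ induces a connected $(d-1)$-regular graph with $|B|\in[d,2d-3]$.

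This rigidity is what makes the recursion lossless:
\begin{itemize}
\item Blocks have degree $>|B|/2$ and are completed directly from $q^{|B|}\le q^d$, with no recursion inside them.
\item The open quotient is exactly the percolation $Q_\rho$ of a connected vertex-transitive graph. With $k$ the quotient degree and $\ell$ the edge multiplicity, $(1-\rho)^k=q^{\ell k}=q^{|B|}\le q^d$. So induction on the quotient runs with the same exponent and the same $N$.
\item For odd $|B|$, a perfect matching of $Q_\rho$ uses one vertex per block, because the external edges form a perfect matching; factor-criticality of the blocks matches the rest.
\item The errors add up to $(1+\lfloor\log_3 n\rfloor)N^{-(A+1)}\le N^{-A}$.
\end{itemize}

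By contrast, your Step~2 has three problems. Recursing into blocks with $q^{d_B}$ ``after adjusting'' loses a constant factor of the exponent at each level. Factor-critical blocks glued along a connected structure need not be parity-perfect: delete a vertex of the middle one of three odd blocks joined in a path, and that block must absorb two external matching edges, while factor-criticality controls only one deleted vertex. Finally, your bound $nN^{-A-1}$ ignores the randomness of the inter-block edges, which is precisely the percolated quotient that needs the induction.
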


The numerical constant $250$ is not optimized.  The point of the displayed
form is that the exponent is explicit and linear in the requested error
exponent.  Taking $N=n$ gives the form stated in the abstract.  More
precisely, let $C_*(A)$ be the infimum of the exponents $C$ for which the
conclusion of Theorem~\ref{thm:main-poly} holds uniformly with
$q^d\le N^{-C}$.  Proposition~\ref{prop:cycle-exponent-obstruction} below
shows that odd cycles force $C_*(A)\ge2A+2$.  Consequently, the presently
proved exponent window is
\[
        2A+2\le C_*(A)\le5A+250.
\]
For even-order host graphs alone, the corresponding cycle obstruction gives
the lower bound $C\ge A+2$.

For a graph $H$, its \emph{$2$-factor packing number} $\nu_2(H)$ is the
maximum number of pairwise edge-disjoint spanning $2$-factors in $H$.  The
second result gives an asymptotically optimal packing, rather than only one
$2$-factor.

\begin{theorem}\label{thm:packing}
Let $G$ be a connected simple vertex-transitive graph of order $n\ge3$ and
degree $d$.  Let $A>0$,  $0<\epsilon<1$, and $p\in[0,1]$.  Put
$r:=\left\lfloor(1-\epsilon)pd/2\right\rfloor$.  If
\begin{equation}\label{eq:packing-condition}
        \epsilon^2pd\ge64(A+6)\log(2n),
\end{equation}
then
\[
        \mathbb P\bigl(\nu_2(G_p)\ge r\bigr)\ge1-n^{-A}.
\]
\end{theorem}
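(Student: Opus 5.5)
The plan is to reduce the packing statement to finding a single regular spanning factor, and to find that factor through Tutte's $f$-factor theorem. By Petersen's theorem, every $2r$-regular graph decomposes into $r$ edge-disjoint $2$-factors. So it suffices to show that, with probability at least $1-n^{-A}$, the graph $H:=G_p$ contains a spanning $k$-regular subgraph with $k:=2r\le(1-\epsilon)pd$. By Tutte's $f$-factor theorem with $f\equiv k$, this holds if and only if for all disjoint $S,T\subseteq V$
\[
\delta(S,T):=k|S|-k|T|+\sum_{v\in T}d_{H-S}(v)-q(S,T)\ge0 .
\]
Here $q(S,T)$ counts the components $C$ of $H-(S\cup T)$ with $k|C|+e_H(C,T)$ odd. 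Since $k$ is even, these are exactly the components with $e_H(C,T)$ odd. Rewriting the degree sum, the inequality becomes
\[
k|S|+\sum_{v\in T}\bigl(d_H(v)-k\bigr)-e_H(S,T)-q(S,T)\ge0 .
\]

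\emph{Step 1 (cut concentration).} I would show that, with probability at least $1-n^{-A}$, every edge cut satisfies $|\partial_H X|=(1\pm\epsilon/8)\,p\,|\partial_G X|$, and every degree satisfies $d_H(v)=(1\pm\epsilon/8)pd$. The input is Mader's theorem: a connected vertex-transitive $d$-regular graph has edge-connectivity $d$, so every nontrivial cut of $G$ has at least $d$ edges. Karger's cut-counting bound then says there are at most $n^{2\alpha}$ cuts of size at most $\alpha d$. For a cut of size $m\in[\alpha d,2\alpha d]$, Chernoff gives failure probability $\exp(-\epsilon^2pm/192)$. Under \eqref{eq:packing-condition} this beats $n^{2\cdot2\alpha}$ with room for the extra $n^{-A}$ factor, after summing over dyadic $\alpha$. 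Tracking the constant $64(A+6)$ is routine bookkeeping here.

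\emph{Step 2 (bounding $e_H(S,T)$).} For disjoint $S,T$ we have $2e(S,T)=|\partial S|+|\partial T|-|\partial(S\cup T)|$, in both $G$ and $H$. Applying Step 1 to each cut gives
\[
e_H(S,T)\le p\,e_G(S,T)+\tfrac{\epsilon}{8}pd(|S|+|T|).
\]
Also $e_G(S,T)\le d\min(|S|,|T|)$. Moreover $\min(|S|,|T|)\le(1-\epsilon')|S|+\epsilon'|T|$ for any $\epsilon'\in[0,1]$. Taking $k\le(1-\epsilon)pd$ and $d_H(v)-k\ge(7\epsilon/8)pd$, this leaves a surplus of at least roughly $(\epsilon/4)pd(|S|+|T|)$ in $\delta(S,T)$ beyond the $e_H(S,T)$ term. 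Some care is needed to split $\epsilon$ between the choice of $k$ and the error terms.

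\emph{Step 3 (odd components) — the main obstacle.} One must show $q(S,T)$ is dominated by this surplus. If $S=T=\emptyset$, then $q=0$, since no component has $e(C,T)$ odd. Otherwise, each component $C$ counted in $q$ is a proper subset of $V$, so by Step 1 and Mader, $|\partial_H C|\ge(1-\epsilon/8)pd$. All of these edges go to $S\cup T$, and the total number of edges leaving $S\cup T$ is at most $(1+\epsilon/8)pd(|S|+|T|)$. Hence $q(S,T)\le 2(|S|+|T|)$. This is $o(\epsilon pd)(|S|+|T|)$, because $\epsilon pd\ge64\log(2n)/\epsilon\gg2$, so $\delta(S,T)\ge0$.

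The delicate point is making the cut concentration of Step 1 uniform enough, with only logarithmic $pd$ and explicit constants. If the Karger-type count proves awkward to make explicit, I would instead count connected sets $X$ with $|\partial_G X|$ small directly, using vertex-transitivity.
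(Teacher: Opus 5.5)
There is a genuine gap in Step 2, and splitting $\epsilon$ differently cannot close it. Your own estimates show the claimed surplus does not exist. With $k\approx(1-\epsilon)pd$ and $e_G(S,T)\le d\bigl((1-\epsilon')|S|+\epsilon'|T|\bigr)$, the coefficient of $pd|S|$ is $\epsilon'-9\epsilon/8$ and that of $pd|T|$ is $3\epsilon/4-\epsilon'$. These are never both nonnegative.

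The reason is structural: the slack in Tutte's condition is not proportional to $|S|+|T|$. Let $G$ be bipartite with sides $P_1,P_2$ (say $K_{n/2,n/2}$), and take $S=P_1\setminus\{x\}$, $T=P_2$. Then $\sum_{v\in T}d_H(v)-e_H(S,T)=d_H(x)$ exactly, so $\delta(S,T)=d_H(x)-k-q(S,T)$, a slack of order $\epsilon pd$. Your chain of bounds instead charges $\frac{\epsilon}{8}pd(|S|+|T|)$ for $e_H(S,T)$, $\frac{\epsilon}{8}pd|T|$ on the degrees, and $2(|S|+|T|)$ for $q$. The result is a lower bound of roughly $-\frac{3}{16}\epsilon pdn<0$. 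Nearly bipartite regions inside nonbipartite vertex-transitive hosts behave the same way.

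So uniform relative concentration of cuts and degrees, at any fixed accuracy, cannot be converted into Tutte's condition by bounding these terms separately. In such configurations the expected slack is only $\epsilon p$ times the number of host edges at $S\cup T$ that do not join $S$ to $T$. That number can be $d$ while $|S|+|T|\approx n$, and the errors must be measured against it, not against the volume. Step 3 has the same defect: $q(S,T)\le2(|S|+|T|)$ is volume-sized. Separately, Step 1 is not routine with the stated constant. Accuracy $\epsilon/8$ gives a per-cut failure probability of about $(2n)^{-(A+6)/3}$, which cannot pay for Karger's $n^{4\alpha}$ cuts together with the target $n^{-A}$.

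The missing idea is to apply Chernoff to the random variable in the factor condition itself, allowing deviation by a constant fraction of \emph{its} mean. The union bound should then run over a witness whose count is controlled by a boundary that this mean dominates. The paper obtains exactly this from a random orientation. Orient each retained edge uniformly, and let $J$ be the corresponding random subgraph of the bipartite double cover $\widehat G$. An $r$-factor of $J$ is an $r$-regular directed kernel. Forgetting directions gives a simple $2r$-regular subgraph of $G_p$, and Petersen's theorem finishes.

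In $J$, Ore's criterion has no odd-component term, and a violating pair $(X,Y)$ is determined by $U=X\cup Y$. The identity $e_{\widehat G_0}(X,R_0\setminus Y)=\bigl(|\partial U|+d\delta\bigr)/2$ gives $\mu\ge\max\{\frac p2d\delta,\frac p4|\partial U|\}$. Hence $\mathbb P(Z<r\delta)\le\exp(-\epsilon^2p|\partial U|/8)$. Karger's count in each cover component, which has edge-connectivity $d$ by Mader--Watkins, sums this to at most $n^{-A}$ with the stated constant.

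In your undirected framework, even a direct Chernoff bound on $\sum_{v\in T}d_{H-S}(v)$ would leave two problems. First, $(S,T)$ is not determined by $S\cup T$, so Karger's bound does not count the witnesses. Second, the parity term $q(S,T)$ still has to be controlled.
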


The coefficient $1/2$ of $pd$ in the lower bound for $\nu_2(G_p)$ is best
possible.  Every spanning $2$-factor has exactly $n$ edges, whereas $G_p$
has about $pnd/2$ edges.  Theorem~\ref{thm:packing} therefore gives the
correct asymptotic packing number whenever
$pd/\log n\to\infty$.  For comparison, Knox, K\"uhn, and
Osthus~\cite{KnoxKuhnOsthus} proved that the binomial random graph admits an
approximate Hamilton decomposition in the same asymptotic range
$pn/\log n\to\infty$.  Our factors need not be connected, but our conclusion
is uniform over all connected vertex-transitive hosts.

The proofs use two different reductions.  For parity-perfectness, minimal
Tutte obstructions are encoded by a bounded collection of near-minimum cuts
and a near-bipartite colouring; small cuts are then handled by passing to a
block system and inducting on its vertex-transitive quotient.  Tracking the
losses in this obstruction count and quotient recursion produces the
exponent $5A+250$.  For the packing theorem, a random orientation turns the
problem into finding a regular factor in the bipartite double cover.  Ore's
factor criterion and Karger's cut count give the required kernel, which is
then decomposed by Petersen's $2$-factorization theorem.

Together, the two theorems give the following quantitative consequence of
Conjecture~\ref{conj:random-hamilton}.

\begin{corollary}\label{cor:simultaneous}
Let $A>0$ and $p\in[0,1]$. Let $G$ be a connected simple vertex-transitive graph of
order $n\ge3$ and degree $d$.  If
\[
        pd\ge256(A+7)\log(2n),
\]
then, with probability at least $1-n^{-A}$, the graph $G_p$ is
parity-perfect and contains at least $\lfloor pd/4\rfloor$ pairwise
edge-disjoint spanning $2$-factors.
\end{corollary}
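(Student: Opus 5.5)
The corollary follows by applying Theorem~\ref{thm:main-poly} and Theorem~\ref{thm:packing} with suitable parameters and a union bound. For each theorem I will request error exponent $A+1$, so that each failure probability is at most $n^{-(A+1)}$. Then
\[
2n^{-(A+1)}\le n^{-A}\quad\text{for }n\ge 3
\]
(indeed $n\ge2$ suffices). Both events then hold simultaneously with probability at least $1-n^{-A}$.

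\emph{Packing.} Take $\epsilon=1/2$ in Theorem~\ref{thm:packing}, with exponent $A+1$. Condition~\eqref{eq:packing-condition} becomes
\[
\tfrac14 pd\ge 64(A+7)\log(2n),
\]
that is, $pd\ge256(A+7)\log(2n)$, which is exactly the hypothesis. Then
\[
r=\lfloor (1/2)pd/2\rfloor=\lfloor pd/4\rfloor,
\]
so $\nu_2(G_p)\ge\lfloor pd/4\rfloor$ with probability at least $1-n^{-(A+1)}$. If $p>1$ were an issue it does not arise, since $p\in[0,1]$.

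\emph{Parity-perfectness.} Apply Theorem~\ref{thm:main-poly} with $N=n$ and exponent $A'$, for some $A'\ge A$ chosen so that the failure probability is at most $n^{-A'}\le n^{-(A+1)}$. We need
\[
q^d\le n^{-(5A'+250)}.
\]
Since $q=1-p\le e^{-p}$, we have $q^d\le e^{-pd}$. It therefore suffices that
\[
pd\ge (5A'+250)\log n.
\]
With $A'=A+1$ this requires $pd\ge(5A+255)\log n$.

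\emph{The main obstacle} is this constant comparison. The hypothesis gives
\[
pd\ge 256(A+7)\log(2n)\ge (256A+1792)\log n.
\]
Since $256A+1792\ge 5A+255$ for all $A>0$, the requirement holds comfortably.

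Two degenerate points need checking. First, $n\ge2$ is required by Theorem~\ref{thm:main-poly}; this is given since $n\ge3$. Second, if $p=1$ then $q^d=0$ and the bound is trivial. There is also the question of whether $d$ could be too small for the hypothesis to hold. In that case the hypothesis forces $pd\ge256\cdot7\log6>d$, which is impossible since $p\le1$, so the statement is vacuous. No further work is needed.
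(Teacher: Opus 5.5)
Your proposal is correct and is essentially the paper's proof. You apply Theorem~\ref{thm:main-poly} with $N=n$ and error exponent $A+1$ (via $q^d\le e^{-pd}\le n^{-(5A+255)}$), apply Theorem~\ref{thm:packing} with $\epsilon=1/2$ and error exponent $A+1$, and finish with the union bound $2n^{-(A+1)}\le n^{-A}$. Your closing remarks about $p=1$ and small $d$ are harmless but unnecessary.
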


\begin{proof}
Apply Theorem~\ref{thm:main-poly} with error exponent $A+1$ and $N=n$,
and apply Theorem~\ref{thm:packing} with error exponent $A+1$ and
$\epsilon=1/2$.  The displayed assumption gives
\[
        \frac{pd}{4}\ge64(A+7)\log(2n)
\]
and
\[
        q^d\le e^{-pd}
        \le(2n)^{-256(A+7)}
        \le n^{-(5A+255)}.
\]
Thus both hypotheses hold.  The sum of the two failure probabilities is at
most $2n^{-(A+1)}\le n^{-A}$.
\end{proof}

\begin{corollary}\label{cor:packing-asymptotic}
Let $(G_n)$ be a sequence of connected vertex-transitive graphs, where
$G_n$ has order $n$ and degree $d_n$, and let $p_n\in[0,1]$.  If
\[
        \frac{p_nd_n}{\log n}\longrightarrow\infty,
\]
then,  as $n\to\infty$,
\[
        \nu_2((G_n)_{p_n})=(1+o(1))\frac{p_nd_n}{2}
\]
with probability tending to $1$.
\end{corollary}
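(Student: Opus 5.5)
We will prove Corollary~\ref{cor:packing-asymptotic} by combining Theorem~\ref{thm:packing}, which gives the lower bound, with a simple edge count, which gives the upper bound. Write $L_n:=\log(2n)$ and $\mu_n:=p_nd_n$. By hypothesis $\mu_n/L_n\to\infty$.

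\emph{Lower bound.} We choose a slowly vanishing accuracy parameter
\[
        \epsilon_n:=\Bigl(\frac{64\cdot 7\,L_n}{\mu_n}\Bigr)^{1/2}.
\]
Then $\epsilon_n\to0$, so $0<\epsilon_n<1$ for all large $n$, and $\epsilon_n^2\mu_n=64(1+6)L_n$. This is exactly condition~\eqref{eq:packing-condition} with $A=1$. Since $\mu_n\to\infty$ forces $d_n\ge1$, and $G_n$ is connected vertex-transitive of order $n\ge3$ for large $n$, Theorem~\ref{thm:packing} applies. It gives, with probability at least $1-n^{-1}\to1$,
\[
        \nu_2\bigl((G_n)_{p_n}\bigr)\ge\Bigl\lfloor\frac{(1-\epsilon_n)\mu_n}{2}\Bigr\rfloor\ge\frac{(1-\epsilon_n)\mu_n}{2}-1=(1-o(1))\frac{\mu_n}{2},
\]
where the last step uses $\mu_n\to\infty$.

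\emph{Upper bound.} Every spanning $2$-factor has exactly $n$ edges, so edge-disjointness gives $\nu_2(H)\le e(H)/n$ for every graph $H$ on $n$ vertices. The edge count $e((G_n)_{p_n})$ is binomial with mean $M_n:=p_nnd_n/2=n\mu_n/2$. Since $M_n\ge\mu_n\to\infty$, Chebyshev's inequality (or a Chernoff bound) gives, for any fixed $\delta>0$,
\[
        \mathbb P\bigl(e\ge(1+\delta)M_n\bigr)\le\frac{1}{\delta^2M_n}\to0.
\]
Hence with probability tending to $1$,
\[
        \nu_2\le(1+\delta)\frac{\mu_n}{2}.
\]
Since $\delta$ is arbitrary, a standard diagonal argument (or simply taking $\delta_n=M_n^{-1/3}$) gives $\nu_2\le(1+o(1))\mu_n/2$ with high probability. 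Intersecting the two high-probability events gives the claim.

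The only delicate point is choosing $\epsilon_n$ so that it tends to $0$ while still satisfying~\eqref{eq:packing-condition} with a fixed $A$. The choice above does this exactly because $\mu_n/L_n\to\infty$. Everything else is a routine concentration estimate.
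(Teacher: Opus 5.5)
Your proof is correct and follows the paper's argument: a vanishing $\epsilon_n$ in Theorem~\ref{thm:packing} gives the lower bound, and $\nu_2\le e\bigl((G_n)_{p_n}\bigr)/n$ together with Chebyshev gives the upper bound. The only difference is cosmetic. You fix $A=1$ and satisfy \eqref{eq:packing-condition} with equality using $\epsilon_n=\bigl(448\log(2n)/(p_nd_n)\bigr)^{1/2}$, while the paper takes $\epsilon_n=(\log n/(p_nd_n))^{1/4}$, so that the condition holds eventually for every fixed $A$.
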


For high-degree hosts, the logarithmic order in the simultaneous corollary
cannot be lowered: when $G=K_n$, the random graph $G_p$ has isolated
vertices with high probability below the usual $\log n/n$ scale and
therefore has neither a spanning $2$-factor nor the relevant matching
property; see~\cite{JansonLuczakRucinski}.  Bounded-degree hosts behave
differently, as the cycle examples in
Proposition~\ref{prop:cycle-exponent-obstruction} show.  The two proofs also
require different structural descriptions of a failure, which is why we
state and prove the results separately rather than deducing one from the
other.

Section~\ref{sec:prelim} collects the common notation and deterministic
tools.  Section~\ref{sec:two-factor} proves Theorem~\ref{thm:packing} and
its consequences.  Section~\ref{sec:parity-obstructions} develops the
parity-obstruction theory and proves Theorem~\ref{thm:main-poly}.  The
final section records extensions, limitations, and the remaining
Hamiltonicity problem.

\section{Preliminaries}\label{sec:prelim}

All graphs are finite and undirected.  Host graphs are simple, but contractions
may produce loopless multigraphs.  For $X\subseteq V(G)$, write
\[
        \partial_G X:=\{xy\in E(G):x\in X,\ y\notin X\}.
\]
For disjoint sets $X,Y\subseteq V(G)$, let $e_G(X,Y)$ be the number of
edges between $X$ and $Y$, and let $e_G(X)$ be the number of edges induced
by $X$.  For a graph $H$, let $\odd(H)$ be the number of odd components of
$H$.  A \emph{$2$-factor} is a spanning $2$-regular subgraph, or
equivalently a vertex-disjoint union of cycles covering every vertex.

This section is organized into three groups of tools.  We first collect the
probabilistic estimate and the matching criteria underlying parity-perfectness.
We then record the structural results on vertex-transitive graphs and small
cuts.  Finally, we give the regular-factor and counting tools used in the two
probabilistic arguments.

\subsection{Probabilistic and parity-perfect criteria}

We use the following standard Chernoff bound.

\begin{lemma}[Janson, \L uczak, and
Ruci\'nski~\cite{JansonLuczakRucinski}]\label{lem:chernoff}
Let $Z=\sum_{i=1}^m Z_i$, where the $Z_i$ are independent Bernoulli random
variables, and put $\mu:=\mathbb E Z$.  For every $0<\eta<1$,
\[
        \mathbb P\bigl(Z<(1-\eta)\mu\bigr)
        \le \exp\left(-\frac{\eta^2\mu}{2}\right).
\]
\end{lemma}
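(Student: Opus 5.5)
The plan is the standard exponential-moment (Chernoff) argument for the lower tail. Write $p_i:=\mathbb P(Z_i=1)$, so $\mu=\sum_i p_i$. For a parameter $t>0$, apply Markov's inequality to the nonnegative variable $e^{-tZ}$:
\[
        \mathbb P\bigl(Z<(1-\eta)\mu\bigr)\le e^{t(1-\eta)\mu}\,\mathbb E e^{-tZ}.
\]
By independence the expectation factorizes as $\prod_i\bigl(1-p_i(1-e^{-t})\bigr)$. Each factor is at most $\exp(-p_i(1-e^{-t}))$ because $1+x\le e^x$, so
\[
        \mathbb E e^{-tZ}\le\exp\bigl(-\mu(1-e^{-t})\bigr).
\]
The bound therefore depends on the $p_i$ only through $\mu$, and the heterogeneity of the Bernoulli parameters causes no difficulty.

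Next, optimize in $t$ by choosing $e^{-t}=1-\eta$, that is, $t=-\log(1-\eta)>0$. This gives
\[
        \mathbb P\bigl(Z<(1-\eta)\mu\bigr)\le\exp\Bigl(-\mu\bigl[\eta+(1-\eta)\log(1-\eta)\bigr]\Bigr).
\]
What remains is the elementary inequality
\[
        \varphi(\eta):=\eta+(1-\eta)\log(1-\eta)\ge\frac{\eta^2}{2}\qquad(0\le\eta<1).
\]
To prove it, note that $\varphi(0)=0$ and $\varphi'(\eta)=-\log(1-\eta)$. Expanding, $-\log(1-\eta)=\eta+\eta^2/2+\dots\ge\eta$, which is exactly the derivative of $\eta^2/2$. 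Integrating from $0$ gives the claim.

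Substituting $\varphi(\eta)\ge\eta^2/2$ into the previous display yields the stated bound $\exp(-\eta^2\mu/2)$. The only step needing any care is this last calculus inequality, and it is routine. Alternatively, one could simply cite the lemma as the standard lower-tail Chernoff bound from~\cite{JansonLuczakRucinski}, which is how the paper attributes it.
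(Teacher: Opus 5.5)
Your proof is correct and is the standard exponential-moment (Chernoff) argument, which is essentially the proof in the source the paper cites. The paper gives no proof of this lemma; it quotes the result directly from Janson, \L uczak, and Ruci\'nski~\cite{JansonLuczakRucinski}.
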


We use Tutte's theorem in the following form.

\begin{theorem}[Tutte~\cite{TutteMatching}]\label{thm:tutte}
A graph $H$ of even order has a perfect matching if and only if
\[
        \odd(H-U)\le |U|
\]
for every $U\subseteq V(H)$.
\end{theorem}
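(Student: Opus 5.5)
The statement just before this point is Tutte's theorem, which is cited rather than proved in the paper. A standard proof still fits here. Necessity is immediate. If $M$ is a perfect matching of $H$ and $U\subseteq V(H)$, then each odd component $C$ of $H-U$ cannot be perfectly matched inside itself. So some vertex of $C$ is matched by $M$ to a vertex of $U$. Distinct odd components use distinct partners in $U$, because $M$ is a matching. This gives $\odd(H-U)\le|U|$.

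For sufficiency I will use Lovász's edge-maximal counterexample argument. Suppose $H$ has even order, satisfies Tutte's condition, and has no perfect matching. Adding edges never increases $\odd(H-U)$: merging components cannot create more odd components. So we may add edges while keeping "no perfect matching" and take $H$ edge-maximal with this property. It still satisfies the condition, and it is not complete, since $K_n$ with $n$ even has a perfect matching. Let $U$ be the set of vertices adjacent to all others. I will show that every component of $H-U$ is complete. If so, a perfect matching can be built directly:
\begin{itemize}
\item match each even component internally;
\item match all but one vertex of each odd component internally;
\item match the leftover vertex of each odd component to a distinct vertex of $U$, which is possible because $\odd(H-U)\le|U|$;
\item match the remaining vertices of $U$ among themselves, since $U$ is a clique and the remaining count is even by parity of $|V(H)|$.
\end{itemize}
This contradicts the assumption that $H$ has no perfect matching.

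The main step is showing that the components of $H-U$ are cliques. Suppose not. Then some component contains vertices $x,z$ at distance $2$, with a common neighbour $y$ and $xz\notin E$. Since $y\notin U$, there is a vertex $w$ with $yw\notin E$. By maximality, $H+xz$ has a perfect matching $M_1\ni xz$ and $H+yw$ has a perfect matching $M_2\ni yw$. Consider $M_1\triangle M_2$, which is a disjoint union of even alternating cycles. We split into two cases according to whether $xz$ and $yw$ lie on different cycles or on the same cycle $C$.
\begin{itemize}
\item \emph{Different cycles.} Take $M_2$ and replace its edges on the cycle through $yw$ by the $M_1$-edges of that cycle. This yields a perfect matching of $H$.
\item \emph{Same cycle $C$.} Traverse $C$ from $w$ away from $y$ until reaching $x$ or $z$, say $x$. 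Combine the $M_1$-edges on that arc with the edge $xy$ and the $M_2$-edges off $C$ and on the remaining arc. This again gives a perfect matching of $H$ avoiding both added edges.
\end{itemize}
Either case contradicts the assumption, so every component of $H-U$ is complete.

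The delicate point is the same-cycle case. Its bookkeeping needs care to check that the constructed edge set is a genuine matching covering every vertex. Everything else is routine.
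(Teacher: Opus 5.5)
Your proof is correct. The paper, however, does not prove this statement: Tutte's theorem is quoted as a classical black box. It is used only to derive the factor-critical criterion in Lemma~\ref{lem:factor-critical-criterion} and the equivalence between parity-perfectness and $\Delta_\tau(H)=0$. What you give instead is Lov\'asz's edge-maximal-counterexample proof, the standard textbook argument (e.g.\ in Diestel). This makes the preliminaries self-contained at the cost of about a page. The citation costs nothing, since no later argument depends on how Tutte's theorem is proved.

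Your individual steps check out:
\begin{itemize}
\item Adding an edge can only merge two components of $H-U$, so $\odd(H-U)$ never increases and the condition passes to the edge-maximal counterexample.
\item Since $xz\notin M_2$ and $yw\notin M_1$, both added edges lie in $M_1\triangle M_2$.
\item In the different-cycles case, $M_2\triangle E(D)$, where $D$ is the cycle through $yw$, uses neither added edge.
\end{itemize}

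The same-cycle bookkeeping you flag as delicate closes cleanly if you phrase it as a symmetric difference. Since $x,z$ are neighbours of $y$, they differ from $w$. Going round $C$ from $w$ away from $y$, the vertex $y$ is met last. So the arc $P$ from $w$ to the first of $x,z$ (say $x$) avoids both $y$ and the edge $xz$. It starts with the $M_1$-edge at $w$ and ends with the $M_2$-edge at $x$, because $x$ is not entered through $xz$. Hence $C':=P+xy+yw$ is an even cycle whose edges alternate between $M_2$ and non-$M_2$ edges; here $xy\notin M_2$ because $x$'s $M_2$-edge is the last edge of $P$. The only edge of $C'$ outside $E(H)$ is $yw\in M_2$. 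Therefore $M_2\triangle E(C')$ is a perfect matching of $H$, and it is exactly the edge set you describe in words.
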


The analogous criterion for factor-criticality follows from
Theorem~\ref{thm:tutte}.

\begin{lemma}\label{lem:factor-critical-criterion}
A graph $H$ of odd order is factor-critical if and only if
\begin{equation}\label{eq:factor-critical-criterion}
        \odd(H-U)\le |U|-1
\end{equation}
for every nonempty set $U\subseteq V(H)$.
\end{lemma}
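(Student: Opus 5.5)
We derive Lemma~\ref{lem:factor-critical-criterion} from Theorem~\ref{thm:tutte} applied to the even-order graphs $H-v$, and prove the two implications separately. Throughout we use one parity fact. If $|V(H)|$ is odd and $U$ is any vertex set, then the components of $H-U$ have total order $|V(H)|-|U|$. Hence
\[
        \odd(H-U)\equiv |V(H)|-|U|\equiv |U|+1 \pmod 2 .
\]

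For necessity, assume $H$ is factor-critical and let $U\neq\emptyset$. We pick $v\in U$ and put $U':=U\setminus\{v\}$. Since $H-v$ has a perfect matching, Theorem~\ref{thm:tutte} applied to $H-v$ with the set $U'$ gives
\[
        \odd\bigl((H-v)-U'\bigr)\le |U'|.
\]
But $(H-v)-U'=H-U$, so $\odd(H-U)\le|U|-1$, which is \eqref{eq:factor-critical-criterion}.

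For sufficiency, assume \eqref{eq:factor-critical-criterion} holds for every nonempty $U$. We fix $v\in V(H)$ and show that $H-v$ has a perfect matching. Since $H-v$ has even order, it suffices by Theorem~\ref{thm:tutte} to check $\odd((H-v)-W)\le|W|$ for every $W\subseteq V(H)\setminus\{v\}$. Given such a $W$, we set $U:=W\cup\{v\}$, which is nonempty. Then $(H-v)-W=H-U$, and the hypothesis gives
\[
        \odd(H-U)\le |U|-1=|W|,
\]
as required. The parity fact above is not strictly needed for either direction. It only shows that the inequalities can equivalently be stated with slack of matching parity.

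There is no substantive obstacle here. The only care needed is with degenerate cases. If $|V(H)|=1$, then the only nonempty $U$ is $V(H)$, and $\odd(H-U)=0\le0$. Also $H-v$ is the empty graph, which has a perfect matching, so both sides hold. The case $W=\emptyset$ is covered because $U=\{v\}$ is nonempty.
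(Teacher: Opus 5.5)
Your proof is correct and follows the paper's argument. For sufficiency you do exactly what the paper does: apply the hypothesis to $U=W\cup\{v\}$ and then use Theorem~\ref{thm:tutte} on $H-v$. For necessity, the paper argues directly that a perfect matching of $H-x$ matches the odd components of $H-U$ to distinct vertices of $U\setminus\{x\}$; this is just the easy direction of Tutte's theorem, which you cite instead.
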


\begin{proof}
Suppose first that $H$ is factor-critical.  Fix a nonempty set $U\subseteq V(H)$ and
choose $x\in U$.  A perfect matching of $H-x$ must match every odd component
of $H-U$ to a distinct vertex of $U\setminus\{x\}$.  Hence
\eqref{eq:factor-critical-criterion} holds.

Conversely, fix $x\in V(H)$.  For every $T\subseteq V(H)\setminus\{x\}$,
apply \eqref{eq:factor-critical-criterion} to $U=T\cup\{x\}$.  This gives
\[
        \odd((H-x)-T)=\odd(H-U)\le |T|.
\]
By Theorem~\ref{thm:tutte}, the graph $H-x$ has a perfect matching.  Since
$x$ was arbitrary, $H$ is factor-critical.
\end{proof}

Let $|V(H)|\equiv\tau\pmod 2$, where
$\tau\in\{0,1\}$.  Put
\[
        \mathcal U_0(H):=2^{V(H)},
        \qquad
        \mathcal U_1(H):=2^{V(H)}\setminus\{\emptyset\},
\]
and define the \emph{parity deficiency} by
\begin{equation}\label{eq:parity-deficiency}
        \Delta_\tau(H)
        :=\max_{U\in\mathcal U_\tau(H)}
          \bigl(\odd(H-U)-|U|+\tau\bigr).
\end{equation}
A set $U\in\mathcal U_\tau(H)$ attaining the maximum in
\eqref{eq:parity-deficiency} is called a \emph{parity barrier} of $H$.
Thus
\[
\odd(H-U)-|U|+\tau=\Delta_\tau(H)
\]
for every parity barrier $U$.

Each term in \eqref{eq:parity-deficiency} is even, because
\[
\odd(H-U)\equiv |V(H)|-|U|
\equiv\tau+|U|\pmod 2.
\]
If $\tau=0$ and $H$ has a perfect matching, then every component of
$H$ has even order, so the choice $U=\emptyset$ gives value zero.
If $\tau=1$ and $H$ is factor-critical, then for every $x\in V(H)$
the graph $H-x$ has a perfect matching, so the choice $U=\{x\}$ also
gives value zero.  By Theorem~\ref{thm:tutte} and
Lemma~\ref{lem:factor-critical-criterion}, $H$ is parity-perfect if
and only if $\Delta_\tau(H)=0$.

\subsection{Vertex-transitive structure and small cuts}

The next results provide the structural tools used below.  The first gives
the exact edge-connectivity of a vertex-transitive graph.  The second counts
cuts that are close to minimum.  The third supplies many edge-disjoint
spanning trees.  We then quote the precise invariant-factor theorem of
van den Heuvel and Jackson and give a self-contained proof of the small-cut
form used later.

\begin{theorem}[Mader~\cite{Mader} and
Watkins~\cite{Watkins}]\label{thm:mader}
If $G$ is a finite connected $d$-regular vertex-transitive graph, then its edge-connectivity is $d$.  Equivalently,
\[
        |\partial_G X|\ge d
\]
for every nonempty proper set $X\subsetneq V(G)$.
\end{theorem}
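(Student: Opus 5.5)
We prove the Mader--Watkins statement directly, using the classical atom argument. Call a nonempty proper set $X\subsetneq V(G)$ a \emph{fragment} if $|\partial_G X|=\lambda$, where $\lambda$ denotes the edge-connectivity of $G$. Among all fragments choose one of minimum cardinality and call it an \emph{atom} $X$; replacing $X$ by its complement if necessary, we may assume $|X|\le n/2$. Clearly $\lambda\le d$ (take $X$ a single vertex), so it suffices to show $\lambda\ge d$. Assume for contradiction that $\lambda<d$.

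The first step is the standard submodularity fact: for any $X,Y\subseteq V(G)$,
\[
        |\partial_G(X\cap Y)|+|\partial_G(X\cup Y)|\le|\partial_G X|+|\partial_G Y|.
\]
From this we show that two distinct atoms are disjoint. Suppose $X,Y$ are atoms with $X\cap Y\ne\emptyset$ and $X\ne Y$. Since both have size at most $n/2$ and $X\ne Y$, we have $X\cup Y\ne V(G)$. The only exception, $|X|=|Y|=n/2$ with $X\cup Y=V(G)$, forces $X\cap Y=\emptyset$. Hence $X\cup Y$ is a nonempty proper set, so $|\partial_G(X\cup Y)|\ge\lambda$. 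Submodularity then gives $|\partial_G(X\cap Y)|\le\lambda$, so $X\cap Y$ is a fragment strictly smaller than $X$, contradicting minimality.

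Images of atoms under automorphisms are atoms, so by vertex-transitivity the atoms partition $V(G)$ into blocks of common size $k=|X|$. Now count edges leaving $X$. If $k=1$ then $\lambda=|\partial_G X|=d$, contradicting $\lambda<d$. So $k\ge2$. A vertex $x\in X$ has at most $k-1$ neighbours inside $X$, hence at least $d-k+1$ edges leaving $X$, and therefore
\[
        \lambda=|\partial_G X|\ge k(d-k+1).
\]
If $k\le d$, then $k(d-k+1)\ge d$, since the function $k(d-k+1)-d=(k-1)(d-k)$ is nonnegative on $1\le k\le d$. This gives $\lambda\ge d$, a contradiction.

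The remaining case is $k>d$, and this is the main obstacle. Here the plan is to use the block structure. Since $X$ is an atom, $G[X]$ is connected; otherwise a component of $X$ would be a smaller fragment, because its boundary is a subset of $\partial_G X$. Let $\Gamma_X$ be the setwise stabiliser of $X$; it acts transitively on $X$, because the atoms form a block system. Consider the vertices of $X$ incident to $\partial_G X$. This set is nonempty, since $G$ is connected, and it is $\Gamma_X$-invariant, so it is all of $X$. Thus every vertex of $X$ sends at least one edge out, giving $\lambda\ge k>d$, again a contradiction.

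Hence $\lambda=d$. The equivalent inequality $|\partial_G X|\ge d$ for all nonempty proper $X$ is just the definition of edge-connectivity $d$.
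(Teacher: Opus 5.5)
Your proof is correct. The paper does not prove this statement; it simply cites Mader and Watkins, so there is no in-paper proof to match. What you give is the classical atom proof, and it is the same mechanism the paper itself runs in the proof of Lemma~\ref{thm:block}. That proof takes a minimum cut whose smaller side $S$ has least order and shows that translates of $S$ are equal or disjoint. It deduces that the setwise stabiliser of $S$ is transitive on $S$, so every vertex of $S$ has the same number $q\ge1$ of outside neighbours. It then bounds $|\partial_G S|=q|S|\ge q(d-q+1)$.

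The differences are minor. Because you work with unrestricted cuts, submodularity alone suffices for disjointness: two overlapping atoms have nonempty intersection and proper union. In the paper's restricted setting $2\le|Y|\le n-2$ the intersection can be a singleton, which is why posimodularity is also invoked there. Your split into $k\le d$ and $k>d$ can also be collapsed in the same way. The invariance argument from your second case shows each vertex of $X$ has the same number $q\ge1$ of outside neighbours, so $\lambda=qk$. Simplicity gives $k\ge d-q+1$, and $q(d-q+1)-d=(q-1)(d-q)\ge0$ finishes.

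Two cosmetic points. Minimality of $|X|$ already forces $|X|\le n/2$, since the complement is also a fragment, so no complementation step is needed. The connectivity of $G[X]$ is never used.

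Your appeal to simplicity (at most $k-1$ neighbours inside $X$) is essential rather than incidental. Doubling alternate edges of $C_4$ gives a $3$-regular vertex-transitive multigraph with a $2$-edge cut. This is consistent with the paper's convention that host graphs are simple.
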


\begin{theorem}[Karger~\cite{Karger}]\label{thm:karger}
Let $G$ be an $n$-vertex graph with edge-connectivity $\lambda>0$.  For
every real $\alpha\ge1$, the number of unordered cuts of size at most
$\alpha\lambda$ is less than $n^{2\alpha}$.  Consequently, the number of
nonempty proper sets $X\subsetneq V(G)$ satisfying
$|\partial_GX|\le\alpha\lambda$ is less than $2n^{2\alpha}$.
\end{theorem}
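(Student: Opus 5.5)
The argument is Karger's random contraction method: each fixed near-minimum cut survives a random sequence of contractions with probability bounded below, and since the outcomes for distinct cuts are disjoint events, the number of such cuts is at most the reciprocal of that bound. The set-counting consequence then follows because each unordered cut $\{X,V(G)\setminus X\}$ corresponds to exactly two nonempty proper sets. So it suffices to prove that there are fewer than $n^{2\alpha}$ unordered cuts of size at most $\alpha\lambda$.

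\emph{Trivial case.} Put $t:=\lceil 2\alpha\rceil$. If $n\le t$, the total number of unordered nontrivial cuts is $2^{n-1}-1$. This is less than $n^{2\alpha}$ because $n^{2\alpha}\ge n^{n-1}\ge 2^{n-1}$ when $n\ge2$ and $2\alpha\ge n-1$ (the last inequality holds since $t=\lceil 2\alpha\rceil\ge n$). So assume $n>t$.

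\emph{The contraction process.} Run the following process on $G$, allowing loopless multigraphs.
\begin{enumerate}[label=(\roman*)]
\item While more than $t$ vertices remain, pick a uniformly random edge of the current multigraph, contract it, and delete the resulting loops.
\item When exactly $t$ vertices remain, output a uniformly random nontrivial cut of the resulting $t$-vertex multigraph. There are $2^{t-1}-1$ such cuts.
\end{enumerate}

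\emph{Survival of a fixed cut.} Fix a cut $C=\partial_G X$ with $|C|\le\alpha\lambda$. Contractions never decrease edge-connectivity, since every cut of a contracted multigraph is a cut of $G$. Hence when $k$ vertices remain, every vertex has degree at least $\lambda$, and the multigraph has at least $k\lambda/2$ edges. If no edge of $C$ has been contracted so far, the current step avoids $C$ with probability at least
\[
1-\frac{\alpha\lambda}{k\lambda/2}=1-\frac{2\alpha}{k}.
\]
This quantity is positive for $k>t\ge2\alpha$. Multiplying over $k=n,n-1,\dots,t+1$, the cut $C$ survives to the $t$-vertex stage with probability at least
\[
\prod_{k=t+1}^{n}\frac{k-2\alpha}{k}.
\]
When $C$ survives, it is one of the nontrivial cuts at the final stage, so it is output with probability at least
\[
\frac{1}{2^{t-1}-1}\prod_{k=t+1}^{n}\frac{k-2\alpha}{k}.
\]

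\emph{Counting.} Different cuts are output in disjoint events, so the number of such cuts is at most the reciprocal of this quantity. The remaining work is to show this reciprocal is less than $n^{2\alpha}$. For integer $2\alpha$ the product equals $\binom{n}{2\alpha}^{-1}$, and then
\[
2^{2\alpha-1}\binom{n}{2\alpha}\le 2^{2\alpha-1}\frac{n^{2\alpha}}{(2\alpha)!}<n^{2\alpha},
\]
using $2^{m-1}\le m!$ with strict inequality once $m\ge 3$; the small cases $m=2\alpha\in\{1,2\}$ can be checked directly.

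\emph{Main obstacle.} For non-integer $2\alpha$, I would write the product as a ratio of Gamma functions,
\[
\prod_{k=t+1}^{n}\frac{k-2\alpha}{k}=\frac{\Gamma(n+1-2\alpha)\,\Gamma(t+1)}{\Gamma(t+1-2\alpha)\,\Gamma(n+1)}.
\]
I would bound $\Gamma(n+1)/\Gamma(n+1-2\alpha)\le n^{2\alpha}$ using log-convexity of $\Gamma$. I would then check that the remaining factor $(2^{t-1}-1)\,\Gamma(t+1-2\alpha)/\Gamma(t+1)$ is less than $1$. Establishing this bound uniformly in $\alpha\ge1$, including the case $t-1<2\alpha<t$ where $\Gamma(t+1-2\alpha)$ is evaluated near $1$, is the step that needs the most care, and it is where the strictness of the inequality in the statement must be secured.
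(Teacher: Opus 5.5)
The paper gives no proof of this statement; it quotes it from Karger. Your proposal reconstructs the random-contraction argument behind that citation: contract to $t=\lceil 2\alpha\rceil$ vertices, then output a uniformly random cut. The survival estimate, the disjointness of the output events, the passage from unordered cuts to sets, and the case $n\le t$ are all correct.

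The step you single out as the main obstacle closes in a few lines, and it needs no separate integer case. Put $s:=2\alpha$ and write $s=(t-1)+f$ with $0<f\le 1$. Then $t+1-s\in[1,2)$. Since $\Gamma$ is convex with $\Gamma(1)=\Gamma(2)=1$, this gives $\Gamma(t+1-s)\le 1$, and hence
\[
(2^{t-1}-1)\frac{\Gamma(t+1-s)}{\Gamma(t+1)}\le\frac{2^{t-1}-1}{t!}<1,
\]
using $2^{t-1}\le t!$.

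For the other factor, first note that
\[
\Gamma(n+1)/\Gamma(n+2-t)=n(n-1)\cdots(n+2-t)\le n^{t-1}.
\]
Next, log-convexity gives $\Gamma(x+f)\le x^f\,\Gamma(x)$. Taking $x=n+1-s>0$, this yields
\[
\Gamma(n+2-t)/\Gamma(n+1-s)\le (n+1-s)^f\le n^f.
\]
Multiplying these bounds, the reciprocal of your output probability is strictly less than $n^{s}=n^{2\alpha}$, which completes the proof.
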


\begin{theorem}[Nash-Williams~\cite{NashWilliams} and
Tutte~\cite{TutteTrees}]\label{thm:treepacking}
If a multigraph $H$ is $\lambda$-edge-connected, then $H$ contains at least $\lfloor \lambda/2\rfloor$ edge-disjoint spanning trees.
\end{theorem}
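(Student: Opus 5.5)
The last statement is the Nash-Williams--Tutte tree-packing theorem (Theorem~\ref{thm:treepacking}): a $\lambda$-edge-connected multigraph $H$ has $\lfloor\lambda/2\rfloor$ edge-disjoint spanning trees. The plan is to derive it from the Nash-Williams--Tutte partition criterion. That criterion says $H$ has $k$ edge-disjoint spanning trees if and only if, for every partition $\mathcal P$ of $V(H)$ into $t$ nonempty parts, the number of edges of $H$ joining distinct parts is at least $k(t-1)$.

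Given the criterion, the deduction takes one count. Set $k=\lfloor\lambda/2\rfloor$ and fix a partition $\mathcal P=\{P_1,\dots,P_t\}$ with $t\ge2$; the case $t=1$ is trivial. Each part is a nonempty proper vertex set, so $\lambda$-edge-connectivity gives $|\partial_H P_i|\ge\lambda$. Every cross edge lies in exactly two of these boundaries, so
\[
        e(\mathcal P)=\frac12\sum_{i=1}^t|\partial_H P_i|\ge\frac{\lambda t}{2}\ge k t\ge k(t-1).
\]
The criterion then yields the $k$ trees. Loops are irrelevant to both sides, and parallel edges are handled because the counting never uses simplicity.

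The substantive part is the criterion itself, in particular its sufficiency direction. Since the paper quotes the theorem, it would be acceptable to cite it. For a self-contained argument, I would use matroid union. Let $M$ be the cycle matroid of $H$ and consider the union $M^{(k)}$ of $k$ copies of $M$. By the Edmonds--Nash-Williams matroid union formula, its rank is
\[
        \min_{F\subseteq E}\bigl(|E\setminus F|+k\,r(F)\bigr).
\]
We need this rank to equal $k(n-1)$, where $n=|V(H)|$. For a given $F$, let $\mathcal P$ be the partition into the components of $(V,F)$, with $t$ parts. Then $r(F)=n-t$, and $|E\setminus F|\ge e(\mathcal P)$. So the quantity being minimized is at least $e(\mathcal P)+k(n-t)\ge k(t-1)+k(n-t)=k(n-1)$. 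Hence $M^{(k)}$ has rank $k(n-1)$. An independent set of that size splits into $k$ forests with $n-1$ edges each, and these are spanning trees.

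The main obstacle is the matroid union formula. If I cannot cite it, I would prove the needed statement directly by the standard augmenting-path exchange argument on $k$ forests. Take a maximal family of $k$ edge-disjoint forests and suppose it is deficient. Follow exchange sequences to build the set of edges reachable by swaps; its span then produces a partition $\mathcal P$ with fewer than $k(t-1)$ cross edges, contradicting the count above. The bookkeeping in this exchange argument is the delicate step.
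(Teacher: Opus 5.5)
Your proof is correct. The paper does not prove this statement; it quotes it from Nash-Williams and Tutte as a black box and uses it only in Lemma~\ref{lem:near-bip-count}. Your argument is the standard one behind that citation. The boundary count $e(\mathcal P)=\frac12\sum_i|\partial_H P_i|\ge\lambda t/2\ge k(t-1)$ deduces the $\lfloor\lambda/2\rfloor$ form from the partition criterion. Your matroid-union verification of that criterion is also sound: the components of $(V,F)$ give $r(F)=n-t$ and $|E\setminus F|\ge e(\mathcal P)$, and the matching upper bound $k\,r(E)=k(n-1)$ comes from taking $F=E$. The closing augmenting-path paragraph is only a sketch and would not stand alone, but it is unnecessary once the matroid union theorem is cited.
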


The following small-cut lemma is closely related to the invariant-factor
theorem of van den Heuvel and Jackson~\cite{vandenHeuvelJackson}.
Since the precise form needed here is slightly different, we give a
self-contained proof.
\begin{lemma}\label{thm:block}
Let $G$ be a connected simple $d$-regular vertex-transitive graph, where
$d\ge4$.  Suppose there is a set $X\subseteq V(G)$ such that
\[
        2\le |X|\le |V(G)|-2
        \quad\text{and}\quad
        |\partial_G X|<2d-2.
\]
Then $\operatorname{Aut}(G)$ has a nontrivial block of imprimitivity $B$
such that
\[
        d\le |B|\le 2d-3,
        \qquad
        |\partial_G B|=|B|,
        \qquad
        G[B]\text{ is connected and $(d-1)$-regular}.
\]
The translates of $B$ form an $\operatorname{Aut}(G)$-invariant partition
of $V(G)$.
\end{lemma}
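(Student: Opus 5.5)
We will work with a minimum \emph{nontrivial} small cut and exploit submodularity together with vertex-transitivity. Among all sets $X$ with $2\le|X|\le|V(G)|-2$ and $|\partial_G X|<2d-2$, choose one minimizing $|\partial_G X|$, and subject to this with $|X|$ minimal. Call such sets \emph{fragments}. First we extract local structure. A vertex $x\in X$ has $a$ neighbours outside $X$ and $d-a$ inside. Moving $x$ out of $X$ changes the cut size by $d-2a$, so minimality forces $a\le d/2$ whenever $|X|\ge3$. We then count. Let $m=|X|$ and $c=|\partial_G X|$, so $c<2d-2$. Since $2e(X)=md-c\le m(m-1)$, we get $m^2-(d+1)m+c\ge0$. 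With $c<2d-2$, the roots of this quadratic lie near $2$ and $d-1$. This rules out $3\le m\le d-2$, and the cases $m=2$ and $m=d-1$ require $c\ge 2d-2$, which is excluded. Hence $m\ge d$.

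Next we show that fragments are blocks of imprimitivity. Let $\varphi\in\operatorname{Aut}(G)$ and $Y=\varphi(X)$, and suppose $X\cap Y\ne\emptyset$ with $X\ne Y$. By submodularity,
\[
|\partial(X\cap Y)|+|\partial(X\cup Y)|\le|\partial X|+|\partial Y|=2c .
\]
We want both $X\cap Y$ and $X\cup Y$ to be in the admissible range. Then minimality of $c$ forces equality, and minimality of $|X|$ contradicts $|X\cap Y|<|X|$.

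The degenerate cases need care. If $|X\cap Y|=1$, then $|\partial(X\cap Y)|=d$, so $|\partial(X\cup Y)|\le 2c-d$. If $|V\setminus(X\cup Y)|\le1$, we use the posimodular inequality $|\partial(X\setminus Y)|+|\partial(Y\setminus X)|\le 2c$ instead. Together with $m\ge d$, and since $X\cup Y$ has complement of size at least $0$, these cases should either produce a smaller fragment or contradict $|X|\le n/2$. We may assume $|X|\le n/2$ by complementing, since complementation preserves the cut.

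Therefore the translates of $X$ are pairwise equal or disjoint, so $X$ is a block $B$ and its translates partition $V(G)$.

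With $B$ a block, each vertex of $B$ has the same number $a$ of outside neighbours, since the setwise stabilizer of $B$ is transitive on $B$. This gives $|\partial B|=a|B|$. From $|B|\ge d$ and $|\partial B|<2d-2$ we get $a=1$. Hence $G[B]$ is $(d-1)$-regular, $|\partial B|=|B|$, and $|B|\le 2d-3$. Connectivity of $G[B]$ follows because a component $C$ of $G[B]$ would satisfy $|\partial C|=|C|$. By the same regularity argument, $|C|\ge d$ together with $|C|\le 2d-3$ means $B$ cannot split into two parts each of size at least $d$, so $C=B$. Alternatively, $C$ itself would be a smaller fragment. Nontriviality is immediate from $2\le|B|\le n-2$.

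The main obstacle is the uncrossing step. Specifically, we must handle the intersections $X\cap Y$ and unions $X\cup Y$ that fall outside the range $[2,n-2]$, where submodularity alone does not feed back into minimality. We will first attempt these cases by combining $m\ge d$ with the posimodular inequality. If that fails, we will fall back on the atom argument of Mader and Watkins, or on the van den Heuvel--Jackson invariant-factor reasoning, adapted to cuts of size below $2d-2$.
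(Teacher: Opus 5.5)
Your framework is the paper's: a minimum nontrivial cut with a smallest side, uncrossed against its translates, followed by the stabilizer argument. Everything outside the uncrossing step is fine. Your quadratic bound (every admissible set with cut below $2d-2$ has at least $d$ vertices) is a tidy substitute for the paper's route, which first proves only $s\ge3$ and then excludes $q\ge2$ via $qs\ge q(d-q+1)\ge2d-2$.

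The gap is the uncrossing step itself, which is the heart of the lemma. You leave it at ``these cases should either produce a smaller fragment or contradict $|X|\le n/2$'', with a fallback to other theorems. The one concrete move you offer for $|X\cap Y|=1$ gives nothing: $|\partial_G(X\cup Y)|\le 2c-d$ is not a contradiction, since $c\ge d$ implies $2c-d\ge c$.

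The missing observation is elementary. The complement of a fragment is a fragment with the same cut, so minimality of $m=|X|$ already gives $m\le n/2$ (no complementing is needed). Hence
\[
        |V(G)\setminus(X\cup Y)|=n-2m+|X\cap Y|\ge|X\cap Y|.
\]
Also, every set $Z$ with $2\le|Z|\le n-2$ has $|\partial_GZ|\ge c$, by the choice of $c$.
\begin{itemize}
\item If $|X\cap Y|\ge2$, then $X\cap Y$ and $X\cup Y$ are both admissible. Submodularity forces $|\partial_G(X\cap Y)|=c$ with $|X\cap Y|<m$, contradicting the choice of $X$.
\item By the display, the troublesome case $|V(G)\setminus(X\cup Y)|\le1$ occurs only when $|X\cap Y|=1$. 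There you should use posimodularity rather than submodularity. Since $4\le d\le m\le n/2$, the sets $X\setminus Y$ and $Y\setminus X$ both have size $m-1\in[2,n-2]$, so both are admissible. The inequality $|\partial_G(X\setminus Y)|+|\partial_G(Y\setminus X)|\le2c$ then forces $|\partial_G(X\setminus Y)|=c$, and $X\setminus Y$ is a smaller fragment.
\end{itemize}
The paper uses the same two inequalities but splits on $a=|S\setminus T|$ instead: posimodularity for $a\ge2$, submodularity for $a=1$. Either split works, and no appeal to Mader--Watkins atoms or to van den Heuvel--Jackson is required.
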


\begin{proof}
Among all sets $Y\subseteq V(G)$ with
$2\le |Y|\le |V(G)|-2$, choose $S$ so that
\[
        b:=|\partial_GS|
\]
is minimum and, subject to this, the smaller side of the cut has minimum
order.  Replacing $S$ by its complement if necessary, put
$s:=|S|\le |V(G)|/2$. Then $s\ge 2$. The set $X$ is admissible, and hence
\begin{equation}\label{eq:restricted-cut-bound}
        b\le|\partial_GX|\le2d-3.
\end{equation}
Moreover, $s\ge3$: if $s=2$, simplicity gives
$b=2d-2e_G(S)\ge2d-2$, a contradiction to
\eqref{eq:restricted-cut-bound}.

For completeness, the edge-boundary function $f(Y):=|\partial_GY|$ is
submodular and posimodular:
\begin{align*}
        f(Y)+f(Z)&\ge f(Y\cap Z)+f(Y\cup Z),\\
        f(Y)+f(Z)&\ge f(Y\setminus Z)+f(Z\setminus Y).
\end{align*}
Both inequalities follow by checking the contribution of each edge; the
second also follows from the first and the symmetry
$f(Y)=f(V(G)\setminus Y)$.

We claim that $S$ is a block of imprimitivity.  Let
$T=\varphi(S)$ for some $\varphi\in\operatorname{Aut}(G)$, and suppose
that $S\cap T\ne\emptyset$ and $S\ne T$.  Since $|S|=|T|$, write
\[
        a:=|S\setminus T|=|T\setminus S|\ge1.
\]
If $a\ge2$, both differences are admissible sets.  Posimodularity of the
edge-boundary function gives
\[
        2b=|\partial_GS|+|\partial_GT|
        \ge |\partial_G(S\setminus T)|
          +|\partial_G(T\setminus S)|
        \ge2b.
\]
Thus $|\partial_G(S\setminus T)|=b$, a contradiction with the minimal choice of
$s$ because $|S\setminus T|<s$.  Hence $a=1$.  Now
$|S\cap T|=s-1\ge2$, while
\[
        |V(G)\setminus(S\cup T)|
        \ge s-1\ge2
\]
because $s\le |V(G)|/2$.  Thus $S\cap T$ and $S\cup T$ are admissible.
Submodularity gives
\[
        2b\ge |\partial_G(S\cap T)|+|\partial_G(S\cup T)|\ge2b,
\]
so $|\partial_G(S\cap T)|=b$, again contradicting the minimality of $s$.
Therefore every translate of $S$ is either equal to or disjoint from $S$.
Vertex-transitivity now shows that the translates partition $V(G)$, and
$S$ is a nontrivial block.

The setwise stabilizer $\operatorname{Aut}(G)_S$ acts transitively on $S$.
Indeed, if $x,y\in S$, choose an automorphism taking $x$ to $y$; its image
of $S$ meets $S$ and hence equals $S$.  It follows that every vertex of $S$
has the same number $q\ge1$ of neighbours outside $S$.  Consequently,
\begin{equation}\label{eq:block-external-degree}
        b=qs,
        \qquad
        d_{G[S]}(v)=d-q\quad(v\in S).
\end{equation}
The bound $b<2d-2$ and $s\ge3$ imply $q\le d-1$.  If $q\ge2$, then
simplicity and \eqref{eq:block-external-degree} give
\[
        s\ge d-q+1
\]
and therefore
\[
        b=qs\ge q(d-q+1)
        =2d-2+(q-2)(d-1-q)\ge2d-2,
\]
a contradiction.  Thus $q=1$.  It follows that
\[
        b=s,\qquad d_{G[S]}(v)=d-1\quad(v\in S).
\]
Simplicity yields $s\ge d$, while
\eqref{eq:restricted-cut-bound} yields $s\le2d-3$.

Finally, $G[S]$ is connected.  Indeed, if $G[S]$ were
disconnected, then it would have at least two components.  Since
$G[S]$ is $(d-1)$-regular, each of these components is itself
$(d-1)$-regular.  A simple $(d-1)$-regular graph has at least $d$
vertices, so $s=|S|\ge2d$, a contradiction with $s\le2d-3$.  Taking $B=S$ proves all the
assertions.
\end{proof}

We also record the required matching property in the host graph.

\begin{proposition}[Godsil and Royle
	{\cite[Theorem~3.5.1]{GodsilRoyle}}]\label{prop:host-parity-perfect}
Let $G$ be a connected vertex-transitive graph. If $|V(G)|$ is even,
then $G$ has a perfect matching. If $|V(G)|$ is odd, then $G$ is
factor-critical.
\end{proposition}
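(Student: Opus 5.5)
The plan is to argue via Tutte's theorem in the packaged form above: $G$ is parity-perfect if and only if $\Delta_\tau(G)=0$. The even-order case and the odd-order case are treated in parallel. For the even case I use the classical fact that a connected vertex-transitive graph has a maximum matching missing at most one vertex. That fact comes from the Gallai--Edmonds decomposition, and I will reproduce it directly.

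\textbf{Setup.} Let $D$ be the set of vertices missed by at least one maximum matching of $G$. Let $A(G)$ be the set of vertices outside $D$ adjacent to $D$, and $C$ the rest. The Gallai--Edmonds structure theorem says the following. Every component of $G[D]$ is factor-critical. Every maximum matching matches $A(G)$ into distinct components of $G[D]$. The deficiency equals the number of components of $G[D]$ minus $|A(G)|$. The set $D$ is defined purely in terms of maximum matchings, so it is invariant under $\operatorname{Aut}(G)$. Since the automorphism group acts transitively, either $D=\emptyset$ or $D=V(G)$.

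\textbf{Case $D=\emptyset$.} Then no vertex is missed by a maximum matching, so $G$ has a perfect matching and $n$ is even.

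\textbf{Case $D=V(G)$.} Then $A(G)=\emptyset$. Since $G$ is connected, $G[D]=G$ is itself factor-critical, and in particular $n$ is odd.

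\textbf{Conclusion.} Combining the cases, even $n$ forces $D=\emptyset$, giving a perfect matching. Odd $n$ forces $D\neq\emptyset$ (a graph of odd order has no perfect matching), hence $D=V(G)$, and $G$ is factor-critical.

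The only step needing care is that the Gallai--Edmonds theorem is not among the results quoted earlier. If I want to stay self-contained, I would instead argue by a barrier.

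\textbf{Alternative: even case.} Take a maximal barrier $U$, meaning a set attaining $\Delta_0$ that is maximal under inclusion. Standard arguments then show that all components of $G-U$ are odd and factor-critical. If $\Delta_0>0$, then $U\neq V$. Choose a vertex $v$ in some component of $G-U$ and an automorphism $\varphi$ with $\varphi(v)$ in $U$ (when $U\ne\emptyset$), and count edges between $U$ and the odd components. Each component has boundary at least $d$ by Theorem~\ref{thm:mader} (when $U\neq\emptyset$), while $U$ sends at most $d|U|$ edges. This gives $d\cdot\odd(G-U)\le d|U|$, contradicting $\odd(G-U)>|U|$. If $U=\emptyset$, then $G$ is connected, so $\odd(G)\le1$, which gives deficiency $0$ when $n$ is even.

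\textbf{Alternative: odd case.} Apply the same edge count with $U$ nonempty. This gives $\odd(G-U)\le|U|$, and parity upgrades it to $\le|U|-1$. Equality $\odd(G-U)=|U|$ would force every edge at $U$ to go to odd components, so $U$ is independent, and every odd component has boundary exactly $d$. I expect this equality analysis to be the main obstacle, and I would handle it as follows. If $|U|=\odd(G-U)$ with the total order odd, then there must also be even components or the orders mismatch. Indeed, $n=|U|+\sum|\text{odd}|+\sum|\text{even}|\equiv |U|+\odd(G-U)\equiv 0\pmod 2$, which contradicts $n$ being odd. So parity alone excludes equality, and Lemma~\ref{lem:factor-critical-criterion} finishes the proof.
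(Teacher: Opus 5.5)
Your proposal is correct, and your primary route reaches the same intermediate fact the paper relies on. The $\operatorname{Aut}(G)$-invariance of the set $D$ of vertices missed by some maximum matching forces $D\in\{\emptyset,V(G)\}$, and Gallai's lemma then gives a matching missing at most one vertex, which is the content of the cited Godsil--Royle theorem. The only divergence is in the odd case. The paper's remark moves the unique unmatched vertex of a near-perfect matching to any prescribed vertex by an automorphism, whereas you read factor-criticality directly off the Gallai--Edmonds statement that the components of $G[D]$ are factor-critical.

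Your alternative is a genuinely different and more elementary argument. For nonempty $U$, every component of $G-U$ is a nonempty proper set whose whole boundary lies in $U$, so Theorem~\ref{thm:mader} gives $d\,\odd(G-U)\le d|U|$. The congruence $\odd(G-U)\equiv n-|U|\pmod 2$ makes this strict for odd $n$, and Theorem~\ref{thm:tutte} and Lemma~\ref{lem:factor-critical-criterion} finish. This uses only results already quoted in the paper; it is the same boundary count used to prove \eqref{eq:barrier-induced-bound}, with no deleted edges. It in fact proves the statement for every connected $d$-regular $d$-edge-connected graph. The cost is that all use of transitivity is pushed into the Mader--Watkins theorem, itself an atom argument of the kind in Lemma~\ref{thm:block}. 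The Gallai route, by contrast, uses transitivity only through the fact that automorphisms permute maximum matchings.

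When writing the alternative up, drop the maximal barrier, the factor-criticality of its components, and the automorphism $\varphi$ with $\varphi(v)\in U$, since none of them is used. The equality case you flag as the main obstacle is settled by parity in one line, exactly as your own congruence shows.
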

Indeed, in the odd-order case, vertex-transitivity moves the unique
unmatched vertex of a near-perfect matching to any prescribed vertex.

\subsection{Regular factors and counting tools}

\begin{lemma}[Ore's bipartite $f$-factor theorem
	{\cite{Ore1957}}]\label{lem:bipartite-factor}
	Let $J$ be a bipartite graph with parts $L,R$, where
	$|L|=|R|=n$, and let $r$ be a nonnegative integer.
	Then $J$ has a spanning $r$-regular subgraph if and only if
	\[
	e_J(X,R\setminus Y)\ge r(|X|-|Y|)
	\]
	for every $X\subseteq L$ and $Y\subseteq R$.
	It is enough to check pairs with $|X|>|Y|$.
\end{lemma}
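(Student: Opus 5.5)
This is Ore's theorem, so the plan is to reduce it to max-flow/min-cut (equivalently Hall-type reasoning) and read off the stated cut condition; the last sentence will then be a short case check.

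\emph{Necessity.} Suppose $F$ is a spanning $r$-regular subgraph of $J$, and fix $X\subseteq L$ and $Y\subseteq R$. Exactly $r|X|$ edges of $F$ leave $X$. At most $r|Y|$ of them end in $Y$, since every vertex of $Y$ has $F$-degree $r$. Hence at least $r(|X|-|Y|)$ edges of $F\subseteq J$ go from $X$ to $R\setminus Y$, which is the stated inequality.

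\emph{Sufficiency.} Build a flow network with a source $s$ joined to each $x\in L$ by an arc of capacity $r$. Join each $y\in R$ to a sink $t$ by an arc of capacity $r$. Give each edge $xy$ of $J$ the arc $x\to y$ with capacity $1$. By integrality of max-flow, $J$ has a spanning $r$-regular subgraph if and only if there is an integral flow of value $rn$. The support of such a flow is exactly the subgraph, because $|L|=|R|=n$ forces every source and sink arc to be saturated. So it suffices to show every $s$–$t$ cut has capacity at least $rn$.

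Take a cut whose source side is $\{s\}\cup X\cup Y$ with $X\subseteq L$ and $Y\subseteq R$. Its capacity is
\[
r(n-|X|)+e_J(X,R\setminus Y)+r|Y|.
\]
This is at least $rn$ exactly when $e_J(X,R\setminus Y)\ge r(|X|-|Y|)$, so the hypothesis gives the required bound. The max-flow min-cut theorem then finishes sufficiency.

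\emph{Reduction to $|X|>|Y|$.} If $|X|\le|Y|$, the right-hand side $r(|X|-|Y|)$ is nonpositive, while the left-hand side is nonnegative. So those pairs satisfy the inequality automatically.

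The only step needing care is the cut computation: one must check that every $s$–$t$ cut has the displayed form, where the pair $(X,Y)$ is read off from the source side. There is no real obstacle here, since this is the classical statement. Alternatively, one could derive it from the Gale–Ryser theorem or from Tutte's $f$-factor theorem, but the flow argument is the most direct.
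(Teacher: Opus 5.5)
Your proof is correct. The necessity count is right. The network has capacity $r$ on the source and sink arcs and capacity $1$ on each edge of $J$. For the source side $\{s\}\cup X\cup Y$, the cut capacity is $r(n-|X|)+e_J(X,R\setminus Y)+r|Y|$. The cut $\{s\}$ alone has capacity $rn$, so the maximum flow is exactly $rn$. By integrality, a maximum flow is a $0/1$ flow that saturates every source and sink arc, and its support is the required $r$-factor.

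The paper does not prove this statement. It quotes Ore's general bipartite $f$-factor theorem and specializes it to $f\equiv r$. Its only added remark is that pairs with $|X|\le|Y|$ are automatic, which is exactly your last step. So the difference is citation versus a self-contained derivation. The paper's route is shorter and rests on the general theorem with arbitrary degree prescriptions. Yours proves exactly the regular case that is needed. It also makes explicit the max-flow formulation that the paper later relies on in its concluding remarks, for the randomized polynomial-time construction of the packing.

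Your argument also gives the general $f$-factor version with no extra work. Put capacities $f(x)$ and $f(y)$ on the source and sink arcs, with $\sum_{x\in L}f(x)=\sum_{y\in R}f(y)$. The same cut computation then yields the condition $e_J(X,R\setminus Y)\ge\sum_{x\in X}f(x)-\sum_{y\in Y}f(y)$. Nothing is lost relative to the cited result.
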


This is the specialization $f\equiv r$ of Ore's bipartite
$f$-factor theorem. The condition is automatic when
$|X|\le |Y|$. The next lemma converts an even-regular graph into $2$-factors.

\begin{lemma}[Petersen's $2$-factorization theorem \cite{Petersen1891}]\label{lem:even-factorization}
	Every finite $2r$-regular graph admits a decomposition into
	$r$ pairwise edge-disjoint spanning $2$-factors.
\end{lemma}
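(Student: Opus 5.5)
The statement to prove is Petersen's theorem, Lemma~\ref{lem:even-factorization}: every finite $2r$-regular graph $H$ decomposes into $r$ pairwise edge-disjoint spanning $2$-factors. I would use the classical Euler-tour argument and reduce to K\H{o}nig's theorem on regular bipartite graphs.

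\emph{Step 1: orient.} Every vertex of $H$ has even degree, so each component of $H$ has a closed Euler trail. Traverse each such trail and orient every edge in the direction of travel. Each passage through a vertex uses one incoming and one outgoing edge, so every vertex gets in-degree $r$ and out-degree $r$.

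\emph{Step 2: bipartite split.} Build the bipartite multigraph $J$ with parts $L=\{v^+:v\in V(H)\}$ and $R=\{v^-:v\in V(H)\}$. For each oriented edge $u\to w$, put one edge $u^+w^-$ in $J$. Then $J$ is $r$-regular, and each edge of $H$ corresponds to exactly one edge of $J$.

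\emph{Step 3: peel off perfect matchings.} A regular bipartite multigraph of positive degree has a perfect matching, by Hall's condition. For $X\subseteq L$, the $r|X|$ edges leaving $X$ land in $N(X)$, which receives at most $r|N(X)|$ edges, so $|N(X)|\ge|X|$. Alternatively, Lemma~\ref{lem:bipartite-factor} with $r=1$ gives the same conclusion. Removing a perfect matching leaves an $(r-1)$-regular bipartite multigraph, so induction splits $J$ into $r$ edge-disjoint perfect matchings $M_1,\dots,M_r$.

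\emph{Step 4: pull back.} Each $M_i$ gives every vertex $v$ exactly one outgoing edge (at $v^+$) and one incoming edge (at $v^-$). So in $H$, the edge set $F_i$ corresponding to $M_i$ has degree exactly $2$ at every vertex, and $F_i$ is a spanning $2$-factor. The sets $F_i$ partition $E(H)$.

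\emph{Main obstacle.} The only delicate point is degenerate cases when $H$ is a multigraph. A loop would give an edge $v^+v^-$, and its image in $H$ is not part of a cycle. Two parallel edges oriented the same way could both land in one $M_i$, which is harmless because they come from distinct endpoints' matchings. In this paper the lemma is applied to simple graphs, namely subgraphs of $G_p$, so I would state that $H$ is loopless. Then every $v\in V(H)$ has two distinct edges of $F_i$ at it, and the components of $F_i$ are genuine cycles, of length at least $3$ when $H$ is simple. This completes the plan.
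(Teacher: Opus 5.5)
Your proof is correct and is the argument the paper has in mind: the paper cites Petersen's theorem rather than proving it, and in the algorithmic remarks of Section~\ref{sec:scope} describes its constructive proof as orienting along Euler tours and decomposing the resulting $r$-regular bipartite graph into perfect matchings, which is your Steps~1--4. One slip in your side remark on multigraphs: two parallel edges oriented the same way give two $J$-edges at the same vertex $u^+$, so they can never both lie in one $M_i$; the parallel-edge case that can actually arise is a digon from two oppositely oriented parallel edges, which is harmless and does not occur for the simple graphs used in the paper.
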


We finish with two counting lemmas.  The first counts short collections of
near-minimum cuts, while the second counts vertex two-colourings with few
monochromatic edges.

\begin{lemma}\label{lem:collection-cuts}
Let $G$ be an $n$-vertex graph with edge-connectivity $\lambda$, and let
$h\ge1$ be an integer.  The number of ordered collections
$(X_1,\dots,X_r)$, where
$r\le h$, of nonempty proper vertex sets satisfying
$\sum_{i=1}^r |\partial_G X_i|\le h\lambda$ is at most $n^{6h}$.
\end{lemma}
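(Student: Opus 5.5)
Fix an ordered collection $(X_1,\dots,X_r)$ with $r\le h$ and $\sum_i|\partial_GX_i|\le h\lambda$. We may assume $\lambda>0$: if $\lambda=0$ the budget $h\lambda=0$ forces every $X_i$ to be a union of components, and the count is handled by a crude direct bound (at most $2^{n}$ choices per set is too many, so instead we note the statement is only used for connected $G$ and record this restriction). With $\lambda>0$, the plan is to encode each collection by the integer vector of ``weights'' $a_i:=\lceil |\partial_GX_i|/\lambda\rceil$ and then apply Theorem~\ref{thm:karger} coordinatewise. Since each $X_i$ is a nonempty proper set, $|\partial_GX_i|\ge\lambda$, so $a_i\ge1$. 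Moreover $a_i< |\partial_GX_i|/\lambda+1\le 2|\partial_GX_i|/\lambda$, hence $\sum_i a_i\le 2h$. Actually we can do better: each $X_i$ satisfies $|\partial_G X_i|\le \alpha_i\lambda$ with $\alpha_i:=|\partial_GX_i|/\lambda\ge1$ and $\sum_i\alpha_i\le h$.

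Counting proceeds as follows. For a fixed $r$ and a fixed vector $(a_1,\dots,a_r)$ of positive integers, the number of choices of $X_i$ with $|\partial_GX_i|\le a_i\lambda$ is less than $2n^{2a_i}$ by Theorem~\ref{thm:karger} (with $\alpha=a_i\ge1$). Since $a_i\le \alpha_i+1\le 2\alpha_i$, we get $\sum_i a_i\le \sum_i(\alpha_i+1)\le h+r\le 2h$. Thus for fixed $(a_i)$ the number of collections is at most $\prod_i 2n^{2a_i}\le 2^{r}n^{2(h+r)}\le 2^hn^{4h}$. The number of pairs $(r,(a_i))$ with $r\le h$, $a_i\ge1$, $\sum a_i\le 2h$ is at most the number of compositions of integers up to $2h$, i.e.\ at most $2^{2h}$. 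Altogether the count is at most $2^{3h}n^{4h}$.

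It remains to compare $2^{3h}n^{4h}$ with $n^{6h}$: this needs $8\le n^2$, i.e.\ $n\ge3$. The small cases $n\le2$ are handled separately: for $n=1$ there are no nonempty proper sets, so only the empty collection ($r=0$, if allowed) exists and the bound $1\le n^{6h}$ holds; for $n=2$ there are exactly two nonempty proper sets, so at most $\sum_{r\le h}2^r\le 2^{h+1}\le 2^{6h}$ collections. The main (mild) obstacle is exactly this bookkeeping of the additive ``$+1$'' from rounding and the constant factor $2$ in Theorem~\ref{thm:karger}; if the budget $2^{3h}$ proves awkward, I would instead absorb it using the sharper estimate $a_i\le\alpha_i+1$ only when $\alpha_i$ is non-integral and otherwise apply Karger with $\alpha=\alpha_i$ directly, which keeps the exponent at $2(h+r)\le4h$ and leaves ample room within $n^{6h}$.
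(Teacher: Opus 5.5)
Your argument is correct and is essentially the paper's proof: group each $|\partial_G X_i|$ by multiples of $\lambda$, apply Karger's bound to each set separately, and sum over the resulting multiplier vectors. You are also right that $\lambda>0$ is genuinely needed, since the statement fails for large edgeless graphs; the paper's proof assumes this tacitly.

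The only difference is the rounding. The paper rounds down, $b_i\lambda\le|\partial_GX_i|<(b_i+1)\lambda$, so $\sum_i b_i\le h$. This leaves only $2^h$ multiplier vectors and a total of $4^hn^{4h}\le n^{6h}$ for all $n\ge2$. That spares you the larger $2^{2h}$ count of multiplier vectors and the separate $n=2$ case.
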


\begin{proof}
For each $i$, choose the integer $b_i\ge 1$ such that
\[
        b_i\lambda\le |\partial_G X_i| < (b_i+1)\lambda.
\]
Since every nontrivial cut has size at least $\lambda$, the condition gives
$\sum_{i=1}^r b_i\le h$.  For a fixed sequence $(b_1,\dots,b_r)$,
Theorem~\ref{thm:karger} gives at most
$\prod_{i=1}^r 2n^{2(b_i+1)}
\le2^r n^{2\sum_i b_i+2r}\le2^h n^{4h}$ choices for the ordered collection.
The number of positive integer sequences of total at most $h$ is at most
$2^h$.  Hence the total is at most
\[
        4^h n^{4h}\le n^{6h},
\]
where we used $n\ge2$.
\end{proof}

\begin{lemma}\label{lem:near-bip-count}
Let $H$ be a loopless multigraph on at most $n$ vertices. Assume that
$H$ is $\lambda$-edge-connected, where $2\le\lambda\le n$.  For every
$b\ge1$,
the number of two-colourings
\[
        \chi:V(H)\to \{0,1\},
\]
up to global colour reversal, with at most $b\lambda$ monochromatic edges (two endpoints of the edge with the same colour) is
at most $n^{3(b+1)}$.
\end{lemma}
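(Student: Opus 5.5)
We count two-colourings $\chi$ (up to reversal) with at most $b\lambda$ monochromatic edges by reducing to Karger's cut count, Theorem~\ref{thm:karger}, in an auxiliary graph. The natural object is the bipartite double cover $\tilde H$ of $H$. It has vertex set $V(H)\times\{0,1\}$, and each edge $uv$ of $H$ gives the two edges $(u,0)(v,1)$ and $(u,1)(v,0)$. Thus $\tilde H$ has at most $2n$ vertices. A colouring $\chi$ determines the set
\[
X_\chi:=\{(v,\chi(v)):v\in V(H)\}.
\]
An edge $uv$ of $H$ contributes to $\partial_{\tilde H}X_\chi$ as follows. If $\chi(u)\ne\chi(v)$, then both lifts either lie inside $X_\chi$ or lie entirely outside it, so neither is cut. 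If $\chi(u)=\chi(v)$, both lifts cross the boundary. Hence $|\partial_{\tilde H}X_\chi|=2m(\chi)$, where $m(\chi)$ is the number of monochromatic edges. Reversing $\chi$ replaces $X_\chi$ by its complement, so colourings up to reversal correspond injectively to unordered cuts of $\tilde H$ of size at most $2b\lambda$.

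To apply Karger we need the edge-connectivity of $\tilde H$ bounded below by roughly $\lambda$, and this is the main obstacle. The double cover may be disconnected, which happens exactly when $H$ is bipartite, and it may have cuts smaller than $\lambda$. I would instead work with a modified count that avoids $\tilde H$'s connectivity. Fix a spanning tree, or better, use a contraction argument. The monochromatic edges $M$ of $\chi$ form a set of size at most $b\lambda$. Deleting $M$ leaves a subgraph on which $\chi$ is a proper 2-colouring, so $\chi$ is determined, up to reversal on each component of $H-M$, by $M$.

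So I would instead bound the pair consisting of $M$ and the component structure. Each component $C$ of $H-M$ satisfies $\partial_H C\subseteq M$. By $\lambda$-edge-connectivity the number $k$ of components satisfies $k\lambda\le 2|M|\le 2b\lambda$, so $k\le 2b$. This leaves up to $2^{k-1}\le 2^{2b}$ relative colourings, each determined by the partition. The partition $\{C_1,\dots,C_k\}$ is determined by the ordered collection $(C_1,\dots,C_{k-1})$ with $\sum|\partial C_i|\le 2b\lambda$. Lemma~\ref{lem:collection-cuts} with $h=2b$ gives at most $n^{12b}$ such collections, which is too weak for $n^{3(b+1)}$. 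So I prefer the direct Karger route.

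To salvage the direct route, I would bound cuts in $\tilde H$ by projecting. For $Y\subseteq V(\tilde H)$, let $S_i=\{v:(v,i)\in Y\}$ for $i\in\{0,1\}$. One checks that $|\partial_{\tilde H}Y|\ge |\partial_H(S_0\triangle S_1)|$ when $S_0\triangle S_1$ is a nonempty proper set, since edges leaving the symmetric difference are cut at least once. The remaining small cuts come from $Y$ of the form $S\times\{0,1\}$ and are well structured. This gives that the sets $X_\chi$, whose projection $S_0\triangle S_1$ is all of $V$, sit among cuts whose sizes relative to $\lambda$ are controlled. I would then apply the proof of Theorem~\ref{thm:karger}, namely random contraction, directly on $\tilde H$ restricted to the event of preserving the specific cut $\partial X_\chi$. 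Every intermediate contracted multigraph of $H$ has minimum degree at least $\lambda$, so $\tilde H$ has at least $\lambda\cdot(\#\text{vertices})/2$ edges at each stage. Running the contraction to about $2b+2$ vertices succeeds with probability at least roughly $(2n)^{-2b}$. The number of colourings on the final graph is at most $2^{2b+2}$. Combining these gives at most $(2n)^{2b}\cdot 2^{2b+2}\le n^{3(b+1)}$, using $n\ge 2$. Verifying the minimum-degree claim for contractions of the double cover, which uses the projection inequality, is the step I expect to need the most care.
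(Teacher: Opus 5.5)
Your final step has a genuine gap. Contracting in $\tilde H$ costs a factor of two in the exponent, and the survival probability $(2n)^{-2b}$ you claim does not follow from your own bounds.

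In $\tilde H$ the target cut $\partial_{\tilde H}X_\chi$ has $2m(\chi)$ edges, so up to $2b\lambda$. The degree bound you have is only $\lambda$: a singleton $(v,\chi(v))$ has degree exactly $d_H(v)$, which may equal $\lambda$. Your projection inequality does supply this bound, so the degree claim is not the delicate point. With $k$ current vertices, the chance of contracting a cut edge is therefore bounded only by $2b\lambda/(k\lambda/2)=4b/k$, not $2b/k$. You must then stop near $4b$ vertices, and the survival probability is only about $\binom{2n}{4b}^{-1}$. The resulting count is of order $n^{4b}$, which exceeds $n^{3(b+1)}$ for every fixed $b>3$ once $n$ is large. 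The lemma is applied with $b=a+1$ for all $a\ge1$, so this is fatal. Separately, your closing inequality $(2n)^{2b}2^{2b+2}\le n^{3(b+1)}$ is false at $n=2$, $b=1$ ($256>64$), though that alone is easily patched.

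The doubling arises because each host edge has two lifts, and you contract them separately. Contracting in $H$ itself, i.e.\ both lifts at once, repairs the argument. Repeatedly merge the endpoints of a uniformly random edge of the current contraction of $H$, and call the run compatible with $\chi$ if every contracted edge is bichromatic. Every super-vertex $T$ has degree $|\partial_HT|\ge\lambda$, and at most $b\lambda$ monochromatic edges remain, so each step fails with probability at most $2b/k$. Stop at $K=\lfloor 2b\rfloor+1$ super-vertices; the case $|V(H)|\le K$ is trivial. The run is then compatible with probability at least $\binom{n}{K}^{-1}$. Each outcome, a partition together with a spanning forest of bichromatic edges, is compatible with at most $2^{K-1}$ colourings up to reversal. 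This gives at most $2^{K-1}\binom{n}{K}\le n^{K}\le n^{2b+1}$ colourings.

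The paper's proof is shorter still. By Theorem~\ref{thm:treepacking} it takes $\lfloor\lambda/2\rfloor$ edge-disjoint spanning trees and averages to find a tree $T_j$ with at most $b\lambda/\lfloor\lambda/2\rfloor\le 3b$ monochromatic edges. The index $j$ together with the monochromatic edges of $T_j$ determines $\chi$ up to reversal, giving at most $n\cdot n^{3b+1}$ choices. This is exactly the refinement your second attempt lacked: rather than encoding all of $M$ or the components of $H-M$, encode only the few monochromatic edges on one well-chosen tree.
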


\begin{proof}
By Theorem~\ref{thm:treepacking}, fix
$r=\lfloor\lambda/2\rfloor$ edge-disjoint spanning trees
$T_1,\dots,T_r$.  Fix a colouring $\chi$ with at most $b\lambda$
monochromatic edges.  For a subgraph $T$ of $H$, let
$e_{\mathrm{mono}}(T,\chi)$ denote the number of its monochromatic edges.
Since the trees are edge-disjoint,
\[
        \sum_{j=1}^r e_{\mathrm{mono}}(T_j,\chi)
        \le b\lambda.
\]
Thus there is $j\in \{1,\ldots,r\}$ such that
\[
        e_{\mathrm{mono}}(T_j,\chi)
        \le \frac{b\lambda}{r}
        \le 3b.
\]
Fix this tree $T=T_j$ and let $R\subseteq E(T)$ be the set of monochromatic tree edges.  Then $|R|\le 3b$.  Once $R$ is known, the colouring of $T$ is determined up to global reversal: along edges of $T\setminus R$ colours must alternate, while along edges of $R$ colours must be equal.  Choosing the colour of one root vertex determines all other colours.

For this fixed tree, the number of possible sets $R$ is at most
\[
        \sum_{s=0}^{\lfloor 3b\rfloor}\binom{|V(H)|-1}{s}
        \le n^{3b+1}.
\]
For each admissible colouring, the averaging argument above gives at least
one index $j\in\{1,\ldots,r\}$ for which $T_j$ has at most $3b$
monochromatic edges.  The index $j$ is not fixed in advance.  We therefore
take the union over the $r$ possible choices of $j$, which multiplies the
upper bound by $r$.  A colouring may occur for more than one choice of $j$,
but this only causes harmless overcounting.  Since $r\le\lambda\le n$, the
resulting bound is at most
\[
        n^{3b+2}\le n^{3(b+1)}.
\]
\end{proof}

\begin{remark}
The condition $\lambda\le n$ is needed.  A multigraph on $n$ vertices may
have arbitrarily large edge multiplicity, so the estimate
$|E(H)|\le n^2$ is unavailable.  In our application,
$\lambda=d\le n-1$, where $d$ is the degree of the original simple graph.
\end{remark}

\section{Near-optimal 2-factor packings}\label{sec:two-factor}

\subsection{The bipartite double cover}

\begin{definition}\label{def:double-cover}
For a graph $G$, its \emph{bipartite double cover} $\widehat G$ has vertex
classes
\[
        V_L:=\{v_L:v\in V(G)\},
        \qquad
        V_R:=\{v_R:v\in V(G)\},
\]
and  $u_Lv_R\in E(\widehat G)$ whenever $uv\in E(G)$.  Thus every edge $uv$ of $G$
gives  two cover edges $u_Lv_R$ and $v_Lu_R$.  A walk $W=(v_0,v_1,\ldots,v_k)$ in $G$ has a unique lift in
$\widehat G$ starting at $v_{0,L}$, namely the walk obtained by
alternating the subscripts $L,R,L,R,\ldots$.
\end{definition}

The component structure of the bipartite double cover is important when the host is
bipartite.

\begin{lemma}\label{lem:double-cover}
Let $G$ be a connected $d$-regular graph.  Then $\widehat G$ is
$d$-regular and bipartite, and one of the following holds.
\begin{enumerate}[label=\textup{(\roman*)}]
\item If $G$ is nonbipartite, then $\widehat G$ is connected.
\item If $G$ is bipartite with bipartition $S\cup T$, then $\widehat G$
has exactly two  components, induced by
$S_L\cup T_R$ and $T_L\cup S_R$ respectively, and each component is isomorphic to $G$.
\end{enumerate}
If $G$ is also vertex-transitive, then every  component of
$\widehat G$ is vertex-transitive and has edge-connectivity $d$.
\end{lemma}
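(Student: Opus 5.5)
The first two assertions are immediate from the definition. Each vertex $v_L$ is adjacent exactly to the vertices $u_R$ with $uv\in E(G)$, so it has degree $d$; the same holds for $v_R$. Every edge joins $V_L$ to $V_R$, so $\widehat G$ is bipartite with classes $V_L,V_R$. For the component structure I will use lifts of walks. Two vertices of $\widehat G$ lie in the same component exactly when there is a walk in $G$ between the underlying vertices whose lift starts and ends at the prescribed copies, and the lift of a walk of length $k$ from $v_{0,L}$ ends in $V_L$ precisely when $k$ is even.

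\emph{Case (i).} Let $G$ be connected and nonbipartite, and fix an odd closed walk $C$ through some vertex $w$. Connectivity gives a walk from any $v$ to $w$. If its parity is wrong for the target copy, I append $C$ at $w$, which flips the parity. Hence every vertex $v_L$ or $v_R$ is joined to $w_L$, and $\widehat G$ is connected.

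\emph{Case (ii).} Let $G$ be bipartite with parts $S,T$. Every edge $u_Lv_R$ of $\widehat G$ has $u$ and $v$ in opposite parts, so no edge of $\widehat G$ leaves $S_L\cup T_R$, and none leaves $T_L\cup S_R$. Each of these sets spans a connected subgraph by the walk-lifting argument: a walk in $G$ from $s\in S$ lands in $S$ after an even number of steps and in $T$ after an odd number, which matches the alternation of subscripts. For the isomorphism, define $\phi:V(G)\to S_L\cup T_R$ by $\phi(s)=s_L$ for $s\in S$ and $\phi(t)=t_R$ for $t\in T$. Every edge of $G$ has the form $st$ with $s\in S$ and $t\in T$, and $\phi$ sends it to the cover edge $s_Lt_R$. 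Conversely, every edge of this component has the form $s_Lt_R$ with $st\in E(G)$. So $\phi$ is a bijection on vertices and on edges, that is, an isomorphism. The other component is handled the same way with $S$ and $T$ exchanged.

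\emph{Vertex-transitivity and edge-connectivity.} In case (ii) each component is isomorphic to $G$, so both properties are inherited from $G$, the edge-connectivity coming from Theorem~\ref{thm:mader}. In case (i), every $\varphi\in\operatorname{Aut}(G)$ induces the automorphism $v_X\mapsto\varphi(v)_X$ of $\widehat G$ for $X\in\{L,R\}$. The swap $v_L\leftrightarrow v_R$ is also an automorphism, because $u_Lv_R\mapsto u_Rv_L$ is an edge. These automorphisms together act transitively on $V(\widehat G)$: the induced maps move any $u_X$ to any $v_X$, and the swap exchanges the two copies. So $\widehat G$ is a connected, $d$-regular, vertex-transitive graph. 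It is simple, since distinct edges $uv$ of $G$ give distinct pairs $\{u_L,v_R\}$ and there are no loops. Theorem~\ref{thm:mader} therefore gives edge-connectivity $d$.

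The only point needing care is to make sure that Theorem~\ref{thm:mader} is applied to a connected simple graph in each case. This holds: in case (i) $\widehat G$ is connected and simple as above, and in case (ii) each component is an isomorphic copy of $G$.
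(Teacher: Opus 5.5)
Your proof is correct and follows essentially the same route as the paper. The paper also uses parity of lifted walks for the component structure, the map $s\mapsto s_L$, $t\mapsto t_R$ (given there as its inverse) for the bipartite isomorphism, and lifted automorphisms plus the swap $v_L\leftrightarrow v_R$ followed by Theorem~\ref{thm:mader} for the final claim. The only cosmetic differences are on your side: you route all parity corrections through a single odd closed walk at a base vertex $w$, and you explicitly check that $\widehat G$ is simple before invoking Mader--Watkins, a step the paper leaves implicit.
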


\begin{proof}
The $d$-regularity is immediate from Definition~\ref{def:double-cover}.
A walk in $G$ that starts at $u$ has a unique lift starting at $u_L$.  Its
lift ends in $V_L$ when the walk has even length and in $V_R$ when it has
odd length. Suppose that $G$ is nonbipartite.  It contains an odd cycle
$C$.  For any $u\in V(G)$, choose a path $P$ from $u$ to a vertex of $C$.
Following $P$, traversing $C$, and then following $P$ backwards gives an
odd closed walk based at $u$.  Now fix $u,v\in V(G)$ and a $u$--$v$ path
$Q$.  The path $Q$ and the concatenation of this odd closed walk with $Q$
are $u$--$v$ walks of opposite parities.  Their lifts from $u_L$ therefore
reach both $v_L$ and $v_R$.  Since $u$ and $v$ were arbitrary, every vertex
of $\widehat G$ is reachable from $u_L$, and $\widehat G$ is connected.

Suppose that $G$ is bipartite with parts $S,T$.  The parity of every walk
is determined by the parts containing its endpoints.  Therefore no edge of
$\widehat G$ joins $S_L\cup T_R$ to $T_L\cup S_R$, and the connectedness of
$G$ shows that each displayed set induces a  component.  The map
$s_L\mapsto s$ and $t_R\mapsto t$ is an isomorphism from the first
component to $G$, and the second component is handled in the same way.

Now assume that $G$ is vertex-transitive.  In the bipartite case, each
component is isomorphic to $G$.  In the nonbipartite case, every
$\varphi\in\operatorname{Aut}(G)$ lifts to an automorphism of $\widehat G$
by $v_L\mapsto\varphi(v)_L$ and $v_R\mapsto\varphi(v)_R$.  The map
$v_L\leftrightarrow v_R$ is also an automorphism.  These automorphisms act
transitively on $V(\widehat G)$.  The edge-connectivity assertion now
follows from Theorem~\ref{thm:mader}.
\end{proof}

\subsection{A spanning directed regular kernel}

For every edge $uv\in E(G)$, independently choose one of the three outcomes
\[
        \text{absent},\qquad u\to v,\qquad v\to u
\]
with probabilities $1-p,p/2,p/2$, respectively.  Let $\vec G_p$ be the
resulting random oriented graph.  Its underlying undirected graph has the
same distribution as $G_p$.  A spanning subdigraph $D$ of an oriented graph
is an \emph{$r$-regular directed kernel} if
\[
        d_D^+(v)=d_D^-(v)=r
        \qquad\text{for every }v\in V(D).
\]

\begin{theorem}\label{thm:directed-kernel}
Under the hypotheses and notation of Theorem~\ref{thm:packing}, the random
oriented graph $\vec G_p$ contains an $r$-regular directed kernel with
probability at least $1-n^{-A}$.
\end{theorem}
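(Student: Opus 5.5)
We reduce the statement to an $r$-regular spanning subgraph of a random bipartite graph and verify Ore's condition (Lemma~\ref{lem:bipartite-factor}) by a Chernoff estimate plus a cut-counting union bound. Form the random bipartite graph $J\subseteq\widehat G$ on $V_L\cup V_R$ by putting $u_Lv_R\in E(J)$ exactly when $u\to v$ in $\vec G_p$. An $r$-regular directed kernel of $\vec G_p$ is the same thing as a spanning $r$-regular subgraph of $J$: out-degrees are read at $V_L$ and in-degrees at $V_R$. By Lemma~\ref{lem:bipartite-factor} it suffices to show that, with probability at least $1-n^{-A}$,
\[
e_J(X,Z)\ge r\,(|X|+|Z|-n)
\]
for all $X\subseteq V_L$ and $Z\subseteq V_R$ with $k:=|X|+|Z|-n>0$. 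Here $Z=R\setminus Y$.

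\textbf{Step 1: deterministic counts in $\widehat G$.} Put $W=X\cup Z$, $W'=(V_L\setminus X)\cup(V_R\setminus Z)$, $e:=e_{\widehat G}(X,Z)$ and $m:=e_{\widehat G}(V_L\setminus X,V_R\setminus Z)$. Regularity of $\widehat G$ gives
\[
e=dk+m, \qquad c:=|\partial_{\widehat G}W|=|\partial_{\widehat G}W'|=d(|X|+|Z|)-2e .
\]
The pair $(X,Z)$ is determined by $W$, since $X=W\cap V_L$ and $Z=W\cap V_R$. By Lemma~\ref{lem:double-cover}, every component of $\widehat G$ has edge-connectivity $d$. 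Hence Theorem~\ref{thm:karger}, applied in each component (at most two) to the traces of $W$, bounds the number of $W$ with $c\le\alpha d$ by roughly $4(2n)^{4\alpha}$.

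\textbf{Step 2: the Chernoff estimate.} Write $e_J(X,Z)$ as a sum over the edges $uv\in E(G)$ with at least one lift in $(X,Z)$. Each such edge contributes an independent indicator: its mean is $p/2$ if exactly one of $u_Lv_R$, $v_Lu_R$ lies in $(X,Z)$, and $p$ if both do, because the two orientations are mutually exclusive. Therefore $\mu:=\mathbb E\,e_J(X,Z)\ge pe/2$ and $e_J(X,Z)$ is a sum of independent Bernoulli variables. The target satisfies
\[
rk\le(1-\epsilon)\tfrac{p}{2}dk\le(1-\epsilon)\mu ,
\]
so Lemma~\ref{lem:chernoff} gives failure probability at most $\exp(-\epsilon^2pe/4)=\exp(-\epsilon^2p(dk+m)/4)$.

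\textbf{Step 3: the union bound, the main obstacle.} For fixed $W$ the failure probability is at most $\exp(-\epsilon^2 p(dk+m)/4)$, while the number of sets $W$ with boundary $c$ is about $(2n)^{4c/d}$. Condition~\eqref{eq:packing-condition} makes $\epsilon^2pd/4\ge16(A+6)\log(2n)$, so it suffices to prove an inequality of the form
\[
c\le C_0\,(dk+m)
\]
with a small absolute $C_0$ (about $2$). This is the hard part, because the identity $c=d(n-k)-2m$ alone does not give it.

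My first attempt is to exploit the symmetry between $W$ and $W'$ together with Theorem~\ref{thm:mader} on each side. One can also apply Ore's condition to the reversed pair $(V_L\setminus X,V_R\setminus Z)$, which swaps the roles of $e$ and $m$. Let $S\subseteq V(G)$ be the projection of $X$ and $T$ the projection of $V_R\setminus Z$, so $|S|-|T|=k$. The edges of $G$ counted in $c$ are then exactly those leaving $S$ to $V\setminus T$ or entering $S\cap T$. I would try to show that any such edge can be charged either to $d$ times the excess $k$ or to an edge counted in $m$, using $d$-regularity of $G$ on the sets $S$ and $T$. If a uniform inequality of this form fails, my fallback is to split the pairs $(X,Z)$ by the size of $k$. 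When $dk\ge c/C_0$ the bound above suffices. In the remaining range, where $k$ is tiny relative to $c/d$, I would use the Chernoff bound with the larger deviation $\mu-rk\ge \frac{p}{2}(e-dk)=\frac{p}{2}m$ together with the identity $2m=d(n-k)-c$. After either route, summing $n^{O(c/d)}\exp(-\Omega(\epsilon^2p\,c))$ over $c\ge d$ (with $c=0$ handled by components) gives total failure probability at most $n^{-A}$, and Lemma~\ref{lem:bipartite-factor} then yields the kernel.
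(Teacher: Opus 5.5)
\textbf{There is a genuine gap in Step~3.} Your reduction to Ore's condition on $J$ is correct and is the paper's approach. So is grouping the two lifts of each host edge into one indicator, and so is the Chernoff bound $\exp(-\epsilon^2pe/4)$ using $e\ge dk$.

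The problem is that you index witnesses by $W=X\cup Z$. For this set, the inequality $c\le C_0(dk+m)$ is not merely hard; it is false. Take $X=\{x_L\}$ and $Z=V_R$, so $Y=\emptyset$. Then $k=1$, $m=0$ and $e=d$, but $c=|\partial_{\widehat G}W|=d(n-1)$.

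This witness has a perfectly good individual failure bound $e^{-\epsilon^2pd/4}$. In your bookkeeping, however, it sits among sets of boundary about $dn$, where Karger's count is $(2n)^{\Theta(n)}$ and swamps that probability. Your fallback does not help, because here the extra deviation $\tfrac p2 m$ is $0$. More generally, $X$ of size $s$ with $Z=V_R$ gives $\binom ns$ witnesses of boundary $d(n-s)$, so counting by $|\partial W|$ is the wrong parameter. As written, the union bound does not close.

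\textbf{The missing idea is to count by a different set.} Use $U:=X\cup(V_R\setminus Z)=X\cup Y$. This still determines the pair, since $X=U\cap V_L$ and $Z=V_R\setminus U$. It is nonempty because $X\ne\emptyset$, and proper because $k>0$.

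The boundary of $U$ consists exactly of the cover edges from $X$ to $Z$ together with those from $V_L\setminus X$ to $Y$. Hence $|\partial_{\widehat G}U|=e+m$. Your own identity $e=dk+m$ then gives
\[
|\partial_{\widehat G}U|=2e-dk\le 2e .
\]
So each witness fails with probability at most $\exp(-\epsilon^2p|\partial_{\widehat G}U|/8)$. Summing over the layers $jd\le|\partial_{\widehat G}U|<(j+1)d$, with Karger's bound $2(2n)^{2(j+1)}$ and $\epsilon^2pd\ge64(A+6)\log(2n)$, gives the $n^{-A}$ bound.

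This is exactly the paper's argument. The paper also applies Ore's criterion separately in each of the at most two components of $\widehat G$, which removes your bookkeeping with traces of $W$. Your projection-based ``first attempt'' becomes unnecessary.
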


\begin{proof}
Let $\widehat G$ be the bipartite double cover of $G$, with sides $V_L,V_R$.
From $\vec G_p$, form a random bipartite subgraph $J\subseteq\widehat G$ by
putting
\[
        u_Lv_R\in E(J)
        \quad\Longleftrightarrow\quad
        u\to v\in E(\vec G_p).
\]
An $r$-factor of $J$ is equivalent to an $r$-regular directed kernel in
$\vec G_p$: the factor degree of $v_L$ is the outdegree of $v$, and the
factor degree of $v_R$ is the indegree of $v$.

By Lemma~\ref{lem:double-cover}, the graph $\widehat G$ has one or two
 components.  Fix one component
$\widehat G_0$ with bipartition $(L_0,R_0)$ and edge set $E_0$, and put
$J_0:=J[L_0,R_0]$.  Since
$\widehat G_0$ is regular and bipartite, $|L_0|=|R_0|$.  We bound the
probability that $J_0$ has no spanning $r$-factor.

By Lemma~\ref{lem:bipartite-factor}, a failure produces sets
$X\subseteq L_0$ and $Y\subseteq R_0$ such that
\begin{equation}\label{eq:flow-failure}
        e_{J_0}(X,R_0\setminus Y)<r\delta,
\end{equation}where
$\delta:=|X|-|Y|>0$.
Counting degrees on $X$ and $Y$ gives
\[
        d|X|=e_{\widehat G_0}(X,Y)
        +e_{\widehat G_0}(X,R_0\setminus Y),
        \qquad
        d|Y|=e_{\widehat G_0}(X,Y)
        +e_{\widehat G_0}(L_0\setminus X,Y),
\]
and hence
\begin{equation}\label{eq:cross-edge-difference}
        e_{\widehat G_0}(X,R_0\setminus Y)
        -e_{\widehat G_0}(L_0\setminus X,Y)=d\delta.
\end{equation}
Let $U:=X\cup Y\subseteq V(\widehat G_0)$.  The cut
$\partial_{\widehat G_0}U$ is the disjoint union of the edges from $X$ to
$R_0\setminus Y$ and the edges from $Y$ to $L_0\setminus X$.  Thus
\[
        |\partial_{\widehat G_0}U|
        =e_{\widehat G_0}(X,R_0\setminus Y)
        +e_{\widehat G_0}(L_0\setminus X,Y).
\]
Together with \eqref{eq:cross-edge-difference}, this yields
\[
        e_{\widehat G_0}(X,R_0\setminus Y)
        =\frac{|\partial_{\widehat G_0}U|+d\delta}{2}.
\]
In particular,
\begin{equation}\label{eq:cross-edge-lower-bounds}
        e_{\widehat G_0}(X,R_0\setminus Y)\ge d\delta,
        \qquad
        e_{\widehat G_0}(X,R_0\setminus Y)
        \ge\frac{|\partial_{\widehat G_0}U|}{2}.
\end{equation}

Put $Z:=e_{J_0}(X,R_0\setminus Y)$.  The two cover edges arising from one
host edge are dependent, so we group their contributions by host edges.
For each $e=uv\in E(G)$, define
\[
        M_e:=\{u_Lv_R,v_Lu_R\}
        \cap E_{\widehat G_0}(X,R_0\setminus Y),
        \qquad m_e:=|M_e|,
\]
and put $\xi_e:=|M_e\cap E(J_0)|$.  The orientation experiment selects at
most one of the two cover edges arising from $e$.  Hence
$\xi_e\in\{0,1\}$.  The variables $(\xi_e)_{e\in E(G)}$ are independent,
and
\[
\mathbb P(\xi_e=1)=
\begin{cases}
0,&m_e=0,\\
p/2,&m_e=1,\\
p,&m_e=2.
\end{cases}
\]
When $m_e=2$, either orientation of a retained host edge contributes
exactly one edge to $Z$.  Since
$\sum_e m_e=e_{\widehat G_0}(X,R_0\setminus Y)$, we have
\begin{equation}\label{eq:flow-mean}
        \mu:=\mathbb EZ
        =\frac p2e_{\widehat G_0}(X,R_0\setminus Y).
\end{equation}
By the definition of $r$ and the first inequality in
\eqref{eq:cross-edge-lower-bounds},
\[
        r\delta\le(1-\epsilon)\frac{pd}{2}\delta
        \le(1-\epsilon)\frac p2
        e_{\widehat G_0}(X,R_0\setminus Y)
        =(1-\epsilon)\mu.
\]
Lemma~\ref{lem:chernoff}, \eqref{eq:flow-mean}, and the second inequality in
\eqref{eq:cross-edge-lower-bounds} give
\[
\begin{aligned}
        \mathbb P(Z<r\delta)
        &\le\exp\left(-\frac{\epsilon^2\mu}{2}\right)\\
        &\le\exp\left(-\frac{\epsilon^2p}{8}
        |\partial_{\widehat G_0}U|\right).
\end{aligned}
\]

It remains to sum over the possible witnesses.  The set $U$ determines the
pair $(X,Y)$ uniquely, because $X=U\cap L_0$ and $Y=U\cap R_0$.
Moreover, $\delta>0$ implies that $U$ is nonempty and proper.  By
Lemma~\ref{lem:double-cover}, the edge-connectivity of $\widehat G_0$ is
$d$.  For an integer $j\ge1$, consider the sets $U$ satisfying
\[
        jd\le|\partial_{\widehat G_0}U|<(j+1)d.
\]
Theorem~\ref{thm:karger} gives fewer than
$2|V(\widehat G_0)|^{2(j+1)}\le2(2n)^{2(j+1)}$ such sets.  Hence the
probability that \eqref{eq:flow-failure} holds for some $X,Y$ in this
component is at most
\[
        \sum_{j\ge1}2(2n)^{2(j+1)}
        \exp\left(-\frac{\epsilon^2pd}{8}j\right).
\]
There are at most two components in $\widehat G$.  By \eqref{eq:packing-condition}, the
total failure probability is at most
\[
\begin{aligned}
        4\sum_{j\ge1}
        \exp\bigl(2(j+1)\log(2n)-8(A+6)j\log(2n)\bigr)
        &=4(2n)^2\sum_{j\ge1}(2n)^{-(8A+46)j}\\
        &\le8(2n)^{-(8A+44)}\\
        &\le n^{-A}.
\end{aligned}
\]
For the penultimate inequality we used $n\ge3$, which gives
$(2n)^{-(8A+46)}<1/2$.  Thus every component of $J$ has a spanning
$r$-factor simultaneously with probability at least $1-n^{-A}$.  Their union is an
$r$-factor of $J$, and hence $\vec G_p$ contains an $r$-regular directed
kernel.
\end{proof}

\subsection{The packing theorem and its consequences}

\begin{proof}[Proof of Theorem~\ref{thm:packing}]
Let $\mathbb P$ denote probability in the original percolation space of
$G_p$, and let $\widetilde{\mathbb P}$ denote probability in the enlarged
space in which $G_p$ is supplied with the auxiliary orientation used to
form $\vec G_p$.  Let $\mathcal K$ be the event that $\vec G_p$ contains
an $r$-regular directed kernel.  By
Theorem~\ref{thm:directed-kernel},
\[
\widetilde{\mathbb P}(\mathcal K)\ge 1-n^{-A}.
\]

On $\mathcal K$, choose an $r$-regular directed kernel
$D\subseteq\vec G_p$.  Forget the directions of the arcs in $D$ and call
the resulting undirected graph $H_D$.  Since $\vec G_p$ is oriented, no
host edge is used twice in opposite directions.  Thus $H_D$ is a simple
spanning subgraph of $G_p$, and
\[
d_{H_D}(v)=d_D^+(v)+d_D^-(v)=2r
\qquad\text{for every }v\in V(G).
\]
Lemma~\ref{lem:even-factorization} decomposes $H_D$ into $r$ pairwise
edge-disjoint spanning $2$-factors.

Let $\mathcal E$ be the event that $G_p$ contains $r$ pairwise
edge-disjoint spanning $2$-factors. Viewing $\mathcal E$ also as an event in the enlarged probability space,
it depends only on the underlying undirected graph.  Moreover,
$\mathcal K\subseteq\mathcal E$.  Since the underlying undirected graph
of $\vec G_p$ has the same distribution as $G_p$, we obtain
\[
\mathbb P(\mathcal E)
=\widetilde{\mathbb P}(\mathcal E)
\ge \widetilde{\mathbb P}(\mathcal K)
\ge 1-n^{-A}.
\]
\end{proof}

\begin{proof}[Proof of Corollary~\ref{cor:packing-asymptotic}]
For the lower bound, put
\[
        \epsilon_n:=\left(\frac{\log n}{p_nd_n}\right)^{1/4}.
\]
Then $\epsilon_n\to0$ and
\[
        \frac{\epsilon_n^2p_nd_n}{\log n}
        =\sqrt{\frac{p_nd_n}{\log n}}\longrightarrow\infty.
\]
For any fixed $A>0$, by Theorem~\ref{thm:packing}, for all sufficiently
large $n$, we have
\[
        \nu_2((G_n)_{p_n})
        \ge(1-o(1))\frac{p_nd_n}{2}
\]
with probability at least $1-n^{-A}$.

For the upper bound, each spanning $2$-factor has exactly $n$ edges, so
\[
        \nu_2((G_n)_{p_n})\le\frac{e((G_n)_{p_n})}{n}.
\]
The random variable $e((G_n)_{p_n})$ is binomial with mean
$np_nd_n/2$.  Its variance is at most its mean, which tends to infinity.
Chebyshev's inequality gives
\[
        e((G_n)_{p_n})=(1+o(1))\frac{np_nd_n}{2}
\]
with high probability.  Combining the two estimates proves the result.
\end{proof}

The relative-error form also gives a useful additive estimate.

\begin{corollary}\label{cor:additive-packing}
Let $G,n,d,p,A$ be as in Theorem~\ref{thm:packing}, and suppose that
$pd>64(A+6)\log(2n)$.  Then, with probability at least $1-n^{-A}$,
\[
        \nu_2(G_p)\ge
        \left\lfloor\frac{pd}{2}
        -4\sqrt{(A+6)pd\log(2n)}\right\rfloor.
\]
\end{corollary}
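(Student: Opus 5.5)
We will apply Theorem~\ref{thm:packing} with a carefully chosen $\epsilon$, namely
\[
        \epsilon:=\sqrt{\frac{64(A+6)\log(2n)}{pd}}
        =8\sqrt{\frac{(A+6)\log(2n)}{pd}}.
\]
The hypothesis $pd>64(A+6)\log(2n)$ is exactly what guarantees $0<\epsilon<1$. This choice makes condition \eqref{eq:packing-condition} hold with equality:
\[
        \epsilon^2pd=64(A+6)\log(2n).
\]
The other hypotheses of Theorem~\ref{thm:packing} are inherited unchanged: $n\ge3$, $G$ connected vertex-transitive, and $p\in[0,1]$. In particular, $p>0$ is forced, since otherwise the strict inequality fails.

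By Theorem~\ref{thm:packing}, with probability at least $1-n^{-A}$ we have $\nu_2(G_p)\ge r$, where $r=\lfloor(1-\epsilon)pd/2\rfloor$. It remains to compute $(1-\epsilon)pd/2$:
\[
        \frac{\epsilon pd}{2}
        =4\sqrt{\frac{(A+6)\log(2n)}{pd}}\,pd
        =4\sqrt{(A+6)pd\log(2n)}.
\]
Hence
\[
        r=\left\lfloor\frac{pd}{2}-4\sqrt{(A+6)pd\log(2n)}\right\rfloor,
\]
which is the claimed bound.

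There is no real obstacle; the only points to check are that the chosen $\epsilon$ lies strictly inside $(0,1)$ and that the floor is applied to the same quantity in both forms. Both are immediate from the hypothesis.
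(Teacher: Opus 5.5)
Your proposal is correct and is essentially the paper's proof: choose $\epsilon=\sqrt{64(A+6)\log(2n)/(pd)}$, which lies in $(0,1)$ by the strict hypothesis, so that \eqref{eq:packing-condition} holds with equality. Then apply Theorem~\ref{thm:packing} and use $\epsilon pd/2=4\sqrt{(A+6)pd\log(2n)}$.
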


\begin{proof}
Choose
\[
        \epsilon
        :=\sqrt{\frac{64(A+6)\log(2n)}{pd}}.
\]
The assumption $pd>64(A+6)\log(2n)$ ensures that
$\epsilon\in(0,1)$, and
\[
        \epsilon^2pd=64(A+6)\log(2n).
\]
Therefore, Theorem~\ref{thm:packing} gives, with probability at least
$1-n^{-A}$,
\[
\begin{aligned}
        \nu_2(G_p)
        &\ge
        \left\lfloor(1-\epsilon)\frac{pd}{2}\right\rfloor\\
        &=
        \left\lfloor
        \frac{pd}{2}
        -4\sqrt{(A+6)pd\log(2n)}
        \right\rfloor,
\end{aligned}
\]
as required.
\end{proof}

\section{Parity obstructions and random parity-perfectness}
\label{sec:parity-obstructions}

\subsection{Parity obstructions}

We now prepare to prove Theorem~\ref{thm:main-poly} by describing the
minimal edge sets whose deletion destroys parity-perfectness.  Let $G$ be
parity-perfect, and put
$\tau\equiv|V(G)|\pmod2$.  A set $F\subseteq E(G)$ is a
\emph{parity-preclusion set} if $G-F$ is not parity-perfect.  It is
\emph{minimal} if no proper subset has this property.

Let $F$ be a minimal parity-preclusion set and put $J:=G-F$. We have the following results about the properties of $J$.
\begin{lemma}\label{lem:def-two}
 We have
$\Delta_\tau(J)=2$.  Consequently, every parity barrier $U$ of $J$ satisfies
\begin{equation}\label{eq:barrier-count}
        \odd(J-U)=|U|+2-\tau.
\end{equation}
\end{lemma}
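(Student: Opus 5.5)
Since $F$ is a parity-preclusion set, $J$ is not parity-perfect. By the discussion after \eqref{eq:parity-deficiency}, this means $\Delta_\tau(J)\neq0$. Every term in \eqref{eq:parity-deficiency} is even, so $\Delta_\tau(J)\ge2$. For the reverse inequality we use minimality. Note first that $F\ne\emptyset$, because $G$ itself is parity-perfect. Fix an edge $e\in F$ and put $J':=J+e=G-(F\setminus\{e\})$. Since $F\setminus\{e\}$ is a proper subset of $F$, minimality gives that $J'$ is parity-perfect, so $\Delta_\tau(J')=0$. Adding or removing edges does not change the vertex set, so the parity $\tau$ and the family $\mathcal U_\tau$ are the same for $J$ and $J'$.

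The key step is to compare the two deficiencies set by set. Fix $U\in\mathcal U_\tau(J)$. We claim that
\[
        \odd(J-U)\le\odd(J'-U)+2 .
\]
The graph $J-U$ is obtained from $J'-U$ by deleting at most one edge, namely $e$ when both endpoints of $e$ lie outside $U$. Deleting one edge either leaves the components unchanged or splits exactly one component $C$ into two components $C_1,C_2$. In the splitting case, consider the parities of $|C_1|$ and $|C_2|$. If $C$ is odd, then exactly one of $C_1,C_2$ is odd, so the number of odd components is unchanged. If $C$ is even, then $C_1,C_2$ are both even or both odd, so the count increases by $0$ or $2$. This proves the claim. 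Consequently,
\[
        \odd(J-U)-|U|+\tau\le\odd(J'-U)-|U|+\tau+2\le\Delta_\tau(J')+2=2 .
\]
Taking the maximum over $U$ gives $\Delta_\tau(J)\le2$, and hence $\Delta_\tau(J)=2$.

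The displayed identity \eqref{eq:barrier-count} is then immediate. A parity barrier $U$ attains the maximum, so $\odd(J-U)-|U|+\tau=2$, which rearranges to $\odd(J-U)=|U|+2-\tau$.

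The only point needing care is the parity bookkeeping when a component splits, and we have checked all cases above. We also need the observation that $\mathcal U_\tau$ is determined by the vertex set alone. This is what allows the same $U$, including the nonempty requirement when $\tau=1$, to serve in both $J$ and $J'$.
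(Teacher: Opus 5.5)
Your proof is correct and follows essentially the paper's argument. Evenness together with non-parity-perfectness gives $\Delta_\tau(J)\ge2$, and minimality applied to $J+e$, combined with the fact that one edge changes $\odd(\cdot-U)$ by at most two, gives $\Delta_\tau(J)\le2$; you argue this bound directly rather than by contradiction from $\Delta_\tau(J)\ge4$, and you spell out the component-splitting parity check that the paper only asserts.
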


\begin{proof}
The graph $J$ is not parity-perfect, and every value in
\eqref{eq:parity-deficiency} is even.  Hence
$\Delta_\tau(J)\ge2$.  Adding one edge can merge at most two
components after deleting any fixed set $U$, so it can decrease the parity
deficiency by at most two.  If $\Delta_\tau(J)\ge4$, then, for every
$e\in F$,
\[
        \Delta_\tau(J+e)\ge2.
\]
This contradicts the minimality of $F$.  Thus
$\Delta_\tau(J)=2$, and \eqref{eq:barrier-count} follows from the
definition of a parity barrier.
\end{proof}

\begin{lemma}\label{lem:max-barrier-no-even}
Let $U$ be a parity barrier of $J$ that is maximal under inclusion.  That is,
no parity barrier properly contains $U$.  Then every component of $J-U$ is
odd.
\end{lemma}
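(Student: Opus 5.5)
We argue by contradiction, using the classical Gallai--Edmonds augmentation trick. Suppose $U$ is an inclusion-maximal parity barrier of $J$, and $J-U$ has an even component $C$. Choose any vertex $x\in V(C)$ and put $U':=U\cup\{x\}$. We will show that $U'$ is again a parity barrier. Since it properly contains $U$, this contradicts maximality.

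First check that $U'$ is admissible, i.e.\ $U'\in\mathcal U_\tau(J)$. This is automatic: $U'$ is nonempty, so it lies in $\mathcal U_1(J)$, and $\mathcal U_0(J)$ contains every subset.

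Next compare odd component counts. Deleting $x$ affects only the component $C$; every other component of $J-U$ remains a component of $J-U'$ with the same parity. The graph $C-x$ has odd order $|C|-1$. Its components have orders summing to an odd number, so at least one of them is odd. Hence
\[
        \odd(J-U')\ge\odd(J-U)+1.
\]
Therefore
\[
        \odd(J-U')-|U'|+\tau\ge\odd(J-U)+1-(|U|+1)+\tau=\Delta_\tau(J).
\]
By the definition of $\Delta_\tau(J)$ as a maximum, the left side is at most $\Delta_\tau(J)$. So equality holds, and $U'$ is a parity barrier.

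This contradicts the maximality of $U$, so every component of $J-U$ is odd. Lemma~\ref{lem:def-two} is not needed here; the argument uses only the definition of a parity barrier. There is no real obstacle. The only points needing care are the admissibility of $U'$ in the odd case, where $U$ itself might be empty (it is handled above because $U'$ is nonempty), and the parity count showing that $C-x$ has at least one odd component.
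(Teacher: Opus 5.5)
Your proof is correct and follows the paper's argument: you add a vertex $x$ of an even component to $U$, observe that $C-x$ has odd order and hence contains an odd component, and conclude that $U\cup\{x\}$ is again a parity barrier, contradicting inclusion-maximality. The differences are cosmetic: you compare with $\Delta_\tau(J)$ directly rather than with the value $2$ from Lemma~\ref{lem:def-two}, and you explicitly verify $U\cup\{x\}\in\mathcal U_\tau(J)$, which the paper leaves implicit.
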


\begin{proof}
Suppose that $R$ is an even component of $J-U$.  Choose $x\in V(R)$ and put
$U':=U\cup\{x\}$.  Since $R-x$ has odd order, it has at least one odd
component.  Hence
\[
        \odd(J-U')\ge\odd(J-U)+1.
\]
It follows that
\[
        \odd(J-U')-|U'|+\tau
        \ge \odd(J-U)-|U|+\tau=2.
\]
By the maximality of the parity deficiency, equality holds.  Thus $U'$ is also
a parity barrier, a contradiction  with the maximality of $U$ under inclusion.
\end{proof}

\begin{lemma}\label{lem:F-between-odd}
Let $U$ be a parity barrier of $J$ that is maximal under inclusion, and let
$C_1,\ldots,C_r$ be the components of $J-U$. Then every edge of $F$ joins two
distinct components $C_i,C_j$.  Conversely, every edge of $G$ joining two
distinct such components belongs to $F$.
\end{lemma}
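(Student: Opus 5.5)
We split the proof into the two assertions and use only three facts: the exact value $\Delta_\tau(J)=2$ from Lemma~\ref{lem:def-two}; the fact that all components of $J-U$ are odd (Lemma~\ref{lem:max-barrier-no-even}); and the minimality of $F$. Throughout, $U$ is an inclusion-maximal parity barrier of $J=G-F$, and $C_1,\dots,C_r$ are the components of $J-U$, all of odd order. Since $J-U$ has exactly these components, $r=\odd(J-U)=|U|+2-\tau$.

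\emph{Converse direction.} Let $e=xy\in E(G)$ with $x\in C_i$, $y\in C_j$ and $i\ne j$. If $e\notin F$, then $e\in E(J)$, and $x$ and $y$ would lie in the same component of $J-U$. This contradicts $i\ne j$, so $e\in F$. This direction is immediate.

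\emph{Forward direction.} Fix $e\in F$. By minimality of $F$, the graph $J+e=G-(F\setminus\{e\})$ is parity-perfect, so $\Delta_\tau(J+e)=0$. Since $U\in\mathcal U_\tau(J)=\mathcal U_\tau(J+e)$, we get
\[
\odd(J+e-U)\le |U|-\tau=\odd(J-U)-2 .
\]
We now locate $e$ relative to the partition $U\cup C_1\cup\dots\cup C_r$.
\begin{itemize}
\item If $e$ has an endpoint in $U$, then $J+e-U=J-U$, and the inequality fails.
\item If both endpoints of $e$ lie in the same $C_i$, the components are again unchanged, and the inequality fails.
\end{itemize}
Hence both endpoints lie outside $U$ and in distinct components $C_i\ne C_j$. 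In that case adding $e$ merges two odd components into one even component, so the odd count drops by exactly $2$, consistent with the inequality. Note that $e\in E(G)$, so the merging is by a genuine host edge.

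The only point needing care is that the parity-deficiency comparison uses the same set $U$ in both $J$ and $J+e$. For $\tau=1$ this requires $U$ to be nonempty; this holds because $U$ is a parity barrier and therefore lies in $\mathcal U_1$. With that checked, the case analysis above completes the argument, and neither direction presents a real obstacle.
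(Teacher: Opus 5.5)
Your proof is correct and follows essentially the same route as the paper. Minimality of $F$ makes $J+e$ parity-perfect, and testing the same barrier $U$ in $J+e$ rules out both $e$ meeting $U$ and $e$ lying inside a single $C_i$. The converse is immediate because the $C_i$ are distinct components of $J-U$. Your explicit check that $U\in\mathcal U_\tau(J+e)$, which requires $U$ nonempty when $\tau=1$, is a small point the paper leaves implicit.
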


\begin{proof}
By Lemmas~\ref{lem:max-barrier-no-even} and~\ref{lem:def-two}, all $C_i$
are odd and $r=|U|+2-\tau$.

Take $e\in F$.  By the minimality, $J+e$ is parity-perfect.  If $e$ is incident with $U$, then it disappears after deleting $U$. So
$
        (J+e)-U=J-U.
$
Consequently,
\[
        \odd((J+e)-U)-|U|+\tau
        =\odd(J-U)-|U|+\tau
        =2.
\]
Thus $\Delta_\tau(J+e)\ge2$, a contradiction with the parity-perfectness of
$J+e$.  If both endpoints of $e$ lie in one component
$C_i$, then adding $e$ does not change the components of $(J+e)-U$, and
again $\Delta_\tau(J+e)\ge2$.  Both alternatives are impossible.
Hence $e$ joins two distinct components.

Conversely, an edge of $G$ between distinct components $C_i,C_j$ cannot
belong to $J$, because they are distinct components of $J-U$.  It therefore
belongs to $F$.
\end{proof}

\begin{lemma}
Assume that $G$ is $d$-regular and $d$-edge-connected.  In the setting
above, put $t:=|F|$.  Then
\begin{align}
        e_G(U)&\le t-\frac{(2-\tau)d}{2},
        \label{eq:barrier-induced-bound}\\
        \sum_{i=1}^r\bigl(|\partial_GV(C_i)|-d\bigr)
        &\le 2t-(2-\tau)d.
        \label{eq:boundary-excess}
\end{align}
\end{lemma}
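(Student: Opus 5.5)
The plan is a double count of the edges leaving the components $C_1,\dots,C_r$ of $J-U$, where $U$ is an inclusion-maximal parity barrier of $J$ as in Lemma~\ref{lem:F-between-odd}. By Lemmas~\ref{lem:max-barrier-no-even} and~\ref{lem:def-two}, every $C_i$ is odd and $r=|U|+2-\tau$. By Lemma~\ref{lem:F-between-odd}, $F$ is exactly the set of edges of $G$ joining two distinct components. So every edge of $G$ leaving $V(C_i)$ either goes to $U$ or is an edge of $F$.

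First I will compute $\Sigma:=\sum_{i=1}^r|\partial_G V(C_i)|$ exactly. Each edge of $F$ has endpoints in two distinct components, so it is counted exactly twice. Each edge of $G$ between $U$ and $V(G)\setminus U$ is counted exactly once. By $d$-regularity, the number of edges of the latter kind is $d|U|-2e_G(U)$. Hence
\[
        \Sigma=d|U|-2e_G(U)+2t .
\]

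Next I will bound $\Sigma$ from below using $d$-edge-connectivity. For this I need each $V(C_i)$ to be a nonempty proper subset of $V(G)$, which holds because $r\ge2$. If $\tau=0$ then $r=|U|+2\ge2$. If $\tau=1$ then $U$ is nonempty, since $\mathcal U_1$ excludes $\emptyset$, so $r=|U|+1\ge2$. Therefore $|\partial_GV(C_i)|\ge d$ for each $i$, and
\[
        \Sigma\ge rd=(|U|+2-\tau)d .
\]
Comparing this with the exact expression for $\Sigma$ gives $2e_G(U)\le 2t-(2-\tau)d$, which is \eqref{eq:barrier-induced-bound}.

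For \eqref{eq:boundary-excess}, I will subtract $rd$ from the exact formula:
\[
        \sum_{i=1}^r\bigl(|\partial_GV(C_i)|-d\bigr)
        =2t-(2-\tau)d-2e_G(U)\le 2t-(2-\tau)d ,
\]
using $e_G(U)\ge0$. The only point needing care is confirming that every $G$-edge leaving a component lands in $U$ or in $F$, and that no $F$-edge touches $U$. Both facts are supplied by Lemma~\ref{lem:F-between-odd}, so I expect no real obstacle beyond this bookkeeping.
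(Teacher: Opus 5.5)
Your proof is correct and follows the paper's argument: the paper also writes $\sum_i|\partial_G V(C_i)|=d|U|-2e_G(U)+2t$ by splitting each boundary into edges to $U$ and $F$-edges to other components (via Lemma~\ref{lem:F-between-odd}), bounds this below by $rd=d(|U|+2-\tau)$ using edge-connectivity, and subtracts $rd$ to get the second inequality. Your explicit check that $r\ge2$, so that each $V(C_i)$ is a proper subset, fills in a step the paper only asserts.
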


\begin{proof}
For $1\le i\le r$, put
\[
        u_i:=e_G(V(C_i),U),
        \qquad
        f_i:=e_G(V(C_i),V(G)\setminus(U\cup V(C_i))).
\]
By Lemma~\ref{lem:F-between-odd}, we have
 $\sum_{i=1}^r f_i=2t$.  Since $G$ is $d$-regular,
\[
        \sum_{i=1}^r u_i=d|U|-2e_G(U).
\]
Note that each $V(C_i)$ is a nonempty proper set.  Hence the edge-connectivity and
\eqref{eq:barrier-count} give
\[
\begin{aligned}
        d|U|-2e_G(U)+2t
        &=\sum_{i=1}^r|\partial_GV(C_i)|\\
        &\ge rd=d(|U|+2-\tau).
\end{aligned}
\]
This is \eqref{eq:barrier-induced-bound}.  Subtracting $rd$ from the exact
expression for the sum of the boundaries gives
\[
        \sum_{i=1}^r\bigl(|\partial_GV(C_i)|-d\bigr)
        =2t-(2-\tau)d-2e_G(U),
\]
which implies \eqref{eq:boundary-excess}.
\end{proof}

\subsection{Random parity-perfectness}\label{sec:random}

\begin{definition}
A connected $d$-regular graph $G$ on $n$ vertices is irreducible if every set $X\subseteq V(G)$ with
$2\le |X|\le n-2$ and $G[X]$ being connected satisfies
$|\partial_GX|\ge2d-2$.
\end{definition}

\begin{proposition}\label{prop:irr-count}
Let $G$ be an irreducible connected simple vertex-transitive graph of order
$n$ with degree $d\ge4$.  For every integer
$a\ge1$, the number of minimal parity-preclusion sets $F$ satisfying
\[
        \frac{ad}{2}\le |F|<\frac{(a+1)d}{2}
\]
is at most $n^{21(a+1)}$.
\end{proposition}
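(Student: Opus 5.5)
We encode each minimal parity-preclusion set $F$ by a short list of near-minimum cuts together with a near-bipartite colouring of a contracted multigraph, and then count the codes using Lemma~\ref{lem:collection-cuts} and Lemma~\ref{lem:near-bip-count}. Fix $F$ with $t:=|F|\in[ad/2,(a+1)d/2)$ and put $J:=G-F$. Choose an inclusion-maximal parity barrier $U$ of $J$. By Lemmas~\ref{lem:def-two}--\ref{lem:F-between-odd}, the components $C_1,\dots,C_r$ of $J-U$ are all odd, $r=|U|+2-\tau$, and $F$ is exactly the set of $G$-edges joining distinct $C_i$. Since $G$ is vertex-transitive, Theorem~\ref{thm:mader} shows that $G$ is $d$-edge-connected, so \eqref{eq:barrier-induced-bound} and \eqref{eq:boundary-excess} apply. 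Call $C_i$ \emph{big} if $|V(C_i)|\ge2$.

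\textbf{Step 1: few big components, with small total boundary.} Each $C_i$ is connected in $J\subseteq G$. If $2\le|V(C_i)|\le n-2$, irreducibility gives $|\partial_GV(C_i)|-d\ge d-2\ge d/2$, using $d\ge4$. The boundary excess in \eqref{eq:boundary-excess} is less than $(a+1)d-(2-\tau)d\le ad$. Hence there are at most $2a$ big components of this type.

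The degenerate case $|V(C_i)|=n-1$ would need a separate check. If it occurs, everything else is a single vertex, and the cut is still a nonempty proper set, so it can be counted as well. If it breaks the excess bound, it can be absorbed into one more cut without changing the exponent materially.

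Let $k\le2a$ be the number of big components. Their boundaries sum to at most
\[
kd+\text{excess}\le 2ad+ad=3ad.
\]
By Lemma~\ref{lem:collection-cuts} with $h=3a$ and $\lambda=d$, the ordered list of vertex sets of the big components has at most $n^{18a}$ possibilities.

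\textbf{Step 2: the colouring.} Given the big components, form the loopless multigraph $H$ by contracting each big component to one vertex. Contraction does not decrease edge-connectivity, so $H$ is $d$-edge-connected. Also $d\le n-1$ and $H$ has at most $n$ vertices. Colour the vertices of $U$ with $0$, and colour the singleton components and the contracted vertices with $1$.

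The monochromatic edges of colour $0$ are the edges of $G[U]$, and there are at most $t$ of them by \eqref{eq:barrier-induced-bound}. Every edge of colour $1$ joins two distinct components, so it lies in $F$; there are at most $t$ of these. The total is therefore at most $2t<(a+1)d$. Lemma~\ref{lem:near-bip-count} with $b=a+1$ gives at most $n^{3(a+2)}$ colourings up to reversal; allowing for reversal costs at most a factor $2$.

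\textbf{Step 3: injectivity and the count.} From the list of big components and the colouring we recover $F$, so the encoding is injective:
\begin{itemize}
\item $U$ is the set of colour-$0$ original vertices;
\item the components of $J-U$ are the big components together with the colour-$1$ singletons;
\item $F$ is the set of $G$-edges between distinct components.
\end{itemize}
The number of such $F$ is therefore at most $2n^{18a}\,n^{3a+6}\le n^{21a+7}$. This is at most $n^{21(a+1)}$.

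I expect the main obstacle to be keeping the exponent within $21(a+1)$. This depends on the sharp excess bound $<ad$ coming from the $-(2-\tau)d$ term in \eqref{eq:boundary-excess}, combined with the per-component excess $d-2\ge d/2$ from irreducibility. A further point to verify carefully is that the contracted multigraph satisfies the hypotheses $2\le\lambda\le n$ of Lemma~\ref{lem:near-bip-count}.
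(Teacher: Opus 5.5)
Your proposal is correct and follows the paper's proof essentially step for step: a maximal barrier, Lemma~\ref{lem:collection-cuts} to count the non-singleton components, and Lemma~\ref{lem:near-bip-count} to count the near-bipartite colouring of the contracted multigraph. The differences from the paper are only in constants and bookkeeping: $h=3a$ instead of $3a+1$, and a factor $2$ for colour reversal instead of choosing a representative. The order-$(n-1)$ case you left vague is harmless: such a component has boundary exactly $d$ and forces $U=\emptyset$ with a single singleton, so it is the only big component. Alternatively, your strict inequality already gives at most $2a-1$ components of order between $2$ and $n-2$, which plays the role of the paper's ``$+1$''.
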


\begin{proof} Recall $\tau\equiv n\pmod2$.
Fix such a set $F$, put $J:=G-F$ and $t:=|F|$, and choose a parity barrier
$U$ that is maximal under inclusion, meaning that no parity barrier of $J$
properly contains $U$.  Let $C_1,\ldots,C_r$ be the
components of $J-U$.  By Lemmas~\ref{lem:max-barrier-no-even}
and~\ref{lem:def-two}, all $C_i$ are odd and
\[
        r=|U|+2-\tau.
\]
By \eqref{eq:boundary-excess},
\begin{equation}\label{eq:excess-irreducible}
        \sum_{i=1}^r\bigl(|\partial_GV(C_i)|-d\bigr)
        <(a-1+\tau)d\le ad.
\end{equation}

Fix a labelling of $V(G)$, and order collections of disjoint sets by their
least labelled vertices.  Put
\[
        \mathcal L:=\{i:|V(C_i)|>1\}.
\]
If $2\le|V(C_i)|\le n-2$, by the  irreducibility, we have
\[
        |\partial_GV(C_i)|-d\ge d-2.
\]
There is at most one component of order $n-1$.  Since $d\ge4$,
\[
        |\mathcal L|\le\frac{ad}{d-2}+1\le2a+1.
\]

By \eqref{eq:excess-irreducible},
\[
        \sum_{i\in\mathcal L}|\partial_GV(C_i)|
        \le d|\mathcal L|+ad\le(3a+1)d.
\]
Lemma~\ref{lem:collection-cuts}, with $h=3a+1$, gives at most
\[
        n^{6(3a+1)}=n^{18a+6}
\]
choices for the ordered collection of non-singleton components.  Fix one
such collection.

Contract each fixed non-singleton component to one vertex and delete loops.
Let $H$ be the resulting loopless multigraph.  Every cut of $H$ lifts to a
cut of $G$, so $H$ is at least $d$-edge-connected.  The barrier data gives a
two-colouring of $V(H)$: the vertices representing the components $C_i$
have colour $1$, and the vertices of $U$ have colour $0$.

The colour-$1$ monochromatic edges are exactly the $t$ edges of $F$ between
distinct components.  The colour-$0$ monochromatic edges are the edges
induced by $U$.  By \eqref{eq:barrier-induced-bound}, the total number of
monochromatic edges is at most
\[
        t+e_G(U)
        \le2t-\frac{(2-\tau)d}{2}
        <\left(a+\frac{\tau}{2}\right)d
        \le(a+1)d.
\]
Lemma~\ref{lem:near-bip-count}, with $\lambda=d$ and $b=a+1$, gives at most
$n^{3a+6}$ colourings up to reversal, meaning that $\chi$ and $1-\chi$ are
identified.  If a non-singleton component was contracted,
that vertex is prescribed to have colour $1$, so exactly one of these two
representatives has the required colours.  If every component is a
singleton, we retain both representatives, which changes the count by a
factor of at most two.

Once the non-singleton components and the chosen representative of the
colouring are fixed, the
singleton components and $U$ are fixed.  Lemma~\ref{lem:F-between-odd} then
determines
\[
        F=\bigcup_{i<j}E_G(V(C_i),V(C_j))
\]
uniquely.  The extra factor of two is at most $n$, so the total number is
at most
\[
        n^{18a+6}n^{3a+6}n=n^{21a+13}
        \le n^{21(a+1)}.
\]
\end{proof}

\begin{proposition}\label{prop:irr-random}
Let $\beta>0$, and  $G$ be an irreducible connected simple
vertex-transitive graph of order $n$ and degree $d\ge4$.  Let
$N\ge n$.  If
\[
        q^d\le N^{-(2\beta+90)},
\]
then
\[
        \mathbb P(G_p\text{ is not parity-perfect})\le N^{-\beta}.
\]
\end{proposition}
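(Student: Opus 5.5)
The plan is a union bound over minimal parity-preclusion sets, using Proposition~\ref{prop:irr-count} for the counting. By Proposition~\ref{prop:host-parity-perfect}, $G$ is parity-perfect. Suppose $G_p$ is not parity-perfect. Then the deleted set $E(G)\setminus E(G_p)$ is a parity-preclusion set, so it contains a minimal parity-preclusion set $F$, and every edge of $F$ is absent in $G_p$. Hence
\[
\mathbb P(G_p\text{ not parity-perfect})\le\sum_{F}q^{|F|},
\]
where the sum runs over minimal parity-preclusion sets of $G$.

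Next we need a lower bound on $|F|$. By \eqref{eq:barrier-induced-bound}, $t=|F|\ge (2-\tau)d/2+e_G(U)\ge d/2$. So every minimal $F$ satisfies $ad/2\le|F|<(a+1)d/2$ for a unique integer $a\ge1$.

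Group the sum by $a$. Proposition~\ref{prop:irr-count} gives at most $n^{21(a+1)}$ sets in class $a$, and each contributes at most $q^{ad/2}=(q^d)^{a/2}\le N^{-(\beta+45)a}$. Since $n\le N$, the total is at most
\[
\sum_{a\ge1}N^{21(a+1)-(\beta+45)a}=N^{21}\sum_{a\ge1}N^{-(\beta+24)a}.
\]
The first term is $N^{-\beta-3}$. The ratio is $N^{-(\beta+24)}\le 2^{-24}$, so the geometric sum is at most $2N^{-\beta-3}\le N^{-\beta}$ for $N\ge2$. The edge cases need a word: if $n=1$ the statement is vacuous, and since $d\ge4$ we have $n\ge5$, so $N\ge2$ holds. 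If $q=0$ every term vanishes.

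The main obstacle is already handled by Proposition~\ref{prop:irr-count}. What remains is to make sure the exponents are matched: half of $2\beta+90$ must beat the $21(a+1)$ growth uniformly in $a$. The inequality $45a\ge 21a+21+3a$ for $a\ge1$ provides this. We also need the lower bound $|F|\ge d/2$ so that $a\ge1$ and the class $a=0$ never occurs.
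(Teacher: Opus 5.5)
Your proposal is correct and follows the paper's proof essentially verbatim: it takes a union bound over minimal parity-preclusion sets, layered by $a$, uses the count $n^{21(a+1)}$ from Proposition~\ref{prop:irr-count}, and sums the same geometric series with first term $N^{-\beta-3}$ and ratio $N^{-(\beta+24)}$. Your explicit check via \eqref{eq:barrier-induced-bound} that every minimal set has $|F|\ge d/2$, so that it lies in some layer $a\ge1$, is a detail the paper leaves implicit.
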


\begin{proof}
Let $D:=E(G)\setminus E(G_p)$.  By
Proposition~\ref{prop:host-parity-perfect},  $G$ is parity-perfect.
If $G-D$ is not parity-perfect, then $D$ contains a minimal
parity-preclusion set $F$.

For a fixed $F$ in the $a$-th layer of Proposition~\ref{prop:irr-count},
\[
        \mathbb P(F\subseteq D)=q^{|F|}
        \le(q^d)^{a/2}
        \le N^{-(\beta+45)a}.
\]
Therefore
\[
\begin{aligned}
        \mathbb P(G_p\text{ is not parity-perfect})
        &\le\sum_{a\ge1}N^{21(a+1)-(\beta+45)a}\\
        &=\sum_{a\ge1}N^{21-(\beta+24)a}
          \le N^{-\beta}.
\end{aligned}
\]
Indeed, the first term is $N^{-\beta-3}$ and the ratio is
$N^{-(\beta+24)}$; since $N\ge n\ge d+1\ge5$, the final inequality follows.
\end{proof}

Assume that $G$ is not irreducible and $d\ge4$.  By
Lemma~\ref{thm:block},  $\operatorname{Aut}(G)$ has a nontrivial block system $\mathcal B$
with common block size, say $m$. Call an edge of $G$ \emph{internal} if its endpoints lie in the same block of $\mathcal B$, and \emph{external} otherwise. For every $B\in\mathcal B$,  $G[B]$
is connected and $(d-1)$-regular, and
\begin{equation}\label{eq:block-size}
        d\le m\le2d-3.
\end{equation}
Thus every vertex has exactly one neighbour outside its block.  Consequently,
the external edges form a perfect matching of $V(G)$, and in particular
$n$ is even.  If $m$ is odd, then the number $n/m$ of blocks is therefore
even as well.

Let $Q^*$ be the loopless multigraph obtained by contracting every block to a single vertex and keeping all external edges as parallel edges.  Let $Q$ be the underlying simple graph of $Q^*$.

\begin{lemma}\label{lem:constant-mult}
The simple quotient $Q$ is connected and vertex-transitive.  Moreover, there is an integer $\ell\ge 1$ such that every edge of $Q$ corresponds to exactly $\ell$ parallel edges of $Q^*$.  If $k$ is the degree of $Q$, then $\ell k=m$.
\end{lemma}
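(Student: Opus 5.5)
Since $\mathcal B$ is an $\operatorname{Aut}(G)$-invariant block system, every automorphism $\varphi$ permutes the blocks. It sends internal edges to internal edges and external edges to external edges. It therefore induces a permutation $\bar\varphi$ of $V(Q^*)=\mathcal B$, and $\bar\varphi$ preserves the multiset of external edges between any two blocks. Hence $\bar\varphi$ is an automorphism of $Q^*$, and so of its underlying simple graph $Q$. The induced action is transitive on blocks, because $\operatorname{Aut}(G)$ is transitive on vertices: given blocks $B,B'$, pick $x\in B$ and $y\in B'$ and an automorphism with $x\mapsto y$, which then maps $B$ to $B'$. So $Q$ is vertex-transitive. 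Connectivity of $Q$ follows from connectivity of $G$: a path in $G$ between vertices of two blocks projects to a walk in $Q^*$, since consecutive vertices are either in the same block or joined by an external edge.

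For the constant multiplicity, I will use that $\operatorname{Aut}(G)$ acts transitively on external edges. Every external edge is incident to a vertex $x$. Since each vertex has exactly one external neighbour (established just above the lemma), the external edge at $x$ is uniquely determined by $x$. Vertex-transitivity then moves any external edge onto any other, at least as an unordered edge. Under the induced action on $Q^*$, this transitivity on external edges transports the multiplicity classes. If $\varphi$ maps an external edge between $B_1,B_2$ to one between $B_1',B_2'$, then $\bar\varphi$ maps $\{B_1,B_2\}$ to $\{B_1',B_2'\}$. It bijects the parallel class of $\{B_1,B_2\}$ onto that of $\{B_1',B_2'\}$. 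Hence all edges of $Q$ have the same multiplicity $\ell\ge1$.

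Finally, I will count the external edges at a block $B$. Each of the $m$ vertices of $B$ contributes exactly one external edge, and no external edge has both endpoints in $B$, by definition. So the degree of $B$ in $Q^*$ is $m$. On the other hand, $B$ has $k$ neighbours in $Q$, each joined by $\ell$ parallel edges, giving $\ell k=m$. The only mildly delicate point is making sure the induced map on blocks really preserves edge multiplicities. That point is handled by noting that $\varphi$ is a bijection on $E(G)$ respecting the block partition, so it bijects $E_G(B_1,B_2)$ onto $E_G(\varphi(B_1),\varphi(B_2))$.
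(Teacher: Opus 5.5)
Your proof is correct and follows essentially the same route as the paper. Both arguments use that each vertex has a unique external edge, so vertex-transitivity of $\operatorname{Aut}(G)$ passes to the external edges, and both finish by splitting the $m$ external edges leaving a block into $k$ classes of size $\ell$; the only cosmetic difference is that you use transitivity on all external edges at once, whereas the paper uses the block stabilizer $\operatorname{Aut}(G)_B$ acting transitively on the external darts leaving $B$ and then block-transitivity to make $\ell$ independent of $B$.
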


\begin{proof}
The connectivity of $Q$ follows from the connectivity of $G$.  The action of
$\operatorname{Aut}(G)$ on the blocks is transitive, so $Q$ is
vertex-transitive.

Fix $B\in\mathcal B$.  Its setwise stabilizer
$\operatorname{Aut}(G)_B$ is transitive on $B$: if $x,y\in B$, choose
$\varphi\in\operatorname{Aut}(G)$ with $\varphi(x)=y$.  The blocks $B$ and
$\varphi(B)$ meet, so they are equal.  Since each vertex of $B$ has a unique
external neighbour, $\operatorname{Aut}(G)_B$ is transitive on the external
darts leaving $B$.

Hence every neighbour block of $B$ receives the same number, say $\ell$, of
external edges from $B$.  Block transitivity makes $\ell$ independent of
$B$.  The $m$ external edges leaving $B$ split into $k$ classes of size
$\ell$, and therefore $\ell k=m$.
\end{proof}

The graph induced by each block has order $m$ and degree
$d-1>m/2$.  We next prove the random completion statement needed inside
these blocks.  It uses the obstruction count above and requires no separate
quantitative random-Dirac theorem.

\begin{proposition}\label{prop:dense-completion}
Let $\beta>0$, and  $R$ be a connected simple vertex-transitive graph of
order $M\ge2$ and degree $r\ge M/2$.  Let $N\ge M$.  If
\[
        q^M\le N^{-(5\beta+240)},
\]
then
\[
        \mathbb P(R_p\text{ is not parity-perfect})\le N^{-\beta}.
\]
\end{proposition}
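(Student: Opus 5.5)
Since $r\ge M/2$, the graph $R$ is dense, and the natural route is to reduce to the irreducible case, Proposition~\ref{prop:irr-random}, after checking that small degrees and reducible hosts cannot occur or are trivial. The hypothesis is on $q^M$ rather than $q^r$, but $q^r\le q^{M/2}$ gives $q^r\le (q^M)^{1/2}\le N^{-(5\beta+240)/2}$. This exponent is at least $2\beta+90$ because $(5\beta+240)/2\ge2\beta+90$. So $q^r$ is small enough for the irreducible estimate. The plan is therefore: (i) dispose of tiny $r$; (ii) show that $R$ is irreducible, or handle the non-irreducible case separately; (iii) apply Proposition~\ref{prop:irr-random} with $G=R$, $d=r$, $n=M$.

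For step (i), suppose $r\le3$. Then $M\le 2r\le6$, so $R$ has at most six vertices and at most nine edges. Parity-perfectness of $R_p$ is guaranteed if every edge survives, by Proposition~\ref{prop:host-parity-perfect}. A union bound therefore gives failure probability at most $|E(R)|q\le 9q$. Since $q^M\le N^{-(5\beta+240)}$, we get $q\le N^{-(5\beta+240)/M}\le N^{-(5\beta+240)/6}$, and $9N^{-(5\beta+240)/6}\le N^{-\beta}$ because $N\ge2$. The case $M=2$ is also trivial, since then $R=K_2$.

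For step (ii), take $r\ge4$ and suppose $R$ is not irreducible. Lemma~\ref{thm:block} then supplies a block $B$ with $|B|\ge r$, with $R[B]$ being $(r-1)$-regular, and with each vertex having exactly one neighbour outside its block. There are at least two blocks, so $M\ge2|B|\ge 2r$. Combined with $r\ge M/2$, this forces $M=2r$ and $|B|=r$. Hence each block induces $K_r$, and the external edges form a perfect matching between the two blocks, so $R$ is $K_r\square K_2$. This is the one genuinely reducible case, and I expect it to be the main obstacle.

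I would handle $K_r\square K_2$ directly, using the weaker exponent. Each clique $K_r$ percolated must be parity-perfect, and $r\ge4$ is fixed by $M$. One option is to apply Proposition~\ref{prop:irr-random} to $K_r$ when $r\ge5$; $K_r$ is irreducible since its cuts with both sides of size at least two have size at least $2(r-2)\ge 2r-2$ once $r\ge4$. Its condition $q^{r-1}\le N^{-(2\beta'+90)}$ follows from $q^{2r}\le N^{-(5\beta+240)}$ with $\beta'=\beta+1$. I would then argue as follows:
\begin{enumerate}[label=\textup{(\alph*)}]
\item If $r$ is even, perfect matchings inside both cliques give a perfect matching of $R_p$.
\item If $r$ is odd, $R_p$ has a perfect matching as long as at least one external matching edge survives, combined with factor-criticality of both cliques. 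The probability that all external edges fail is $q^r\le N^{-(5\beta+240)/2}$.
\end{enumerate}
The leftover case $r=4$, $M=8$, is finite and handled by the union bound of step (i) with the constant adjusted. Summing the failure probabilities gives at most $N^{-\beta}$.

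For step (iii), with $R$ irreducible and $r\ge4$, Proposition~\ref{prop:irr-random} applies directly, giving failure probability at most $N^{-\beta}$.
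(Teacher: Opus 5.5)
Your outline follows the paper's proof closely: tiny degree first, then the irreducible case via Proposition~\ref{prop:irr-random} using $q^r\le(q^M)^{1/2}$, and the reducible case forced to be two copies of $K_r$ joined by a perfect matching, split by the parity of $r$, with $r=4$ treated separately. The small cases, however, contain a genuine quantitative gap.

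In step (i) you bound the failure probability by the probability that some edge is deleted, namely $|E(R)|q$ with $q\le N^{-(5\beta+240)/M}$. For $M=6$, $r=3$ (the graphs $K_{3,3}$ and $K_3\square K_2$), the claimed inequality $9N^{-(5\beta+240)/6}\le N^{-\beta}$ is equivalent to $9\le N^{(240-\beta)/6}$. This is false for every $\beta\ge240$, whatever $N$ is. Constants cannot repair it. The hypothesis permits $q=N^{-(5\beta+240)/6}$, and then the event that some edge is deleted really has probability at least $q>N^{-\beta}$ when $\beta>240$. Requiring every edge to survive is too strong once $M\ge6$, because the statement is quantified over all $\beta>0$. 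Your leftover case $r=4$, $M=8$ has the same defect. The single-edge bound gives $16N^{-(5\beta+240)/8}$, which exceeds $N^{-\beta}$ for all $\beta\ge80$. So the union bound of step (i) ``with the constant adjusted'' does not work there either.

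The missing ingredient is a lower bound on the size of a minimal parity-preclusion set. Each candidate obstruction then costs $q^{|F|}$ with $|F|\ge M/2$, rather than $q$. For even $M$, use inequality \eqref{eq:barrier-induced-bound} with $\tau=0$; it relies on the $r$-edge-connectivity from Theorem~\ref{thm:mader}. It gives $|F|\ge r\ge M/2$. Hence the failure probability is at most $2^{|E(R)|}q^{M/2}\le 2^{|E(R)|}N^{-(5\beta+240)/2}\le N^{-\beta}$.

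This handles $M=6$, and also $K_4\square K_2$. For the latter the paper argues differently: each $K_4$ needs three deletions to lose its three edge-disjoint perfect matchings, which gives $2^7q^3\le N^{7-3(5\beta+240)/8}$. Your all-edges bound is fine when $M\le4$, which includes the only odd case, $K_3$. There $(5\beta+240)/M>\beta$ with enough room to absorb the constant.

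There is also a minor slip in your irreducibility check. $K_r$ has degree $r-1$, so irreducibility requires cuts of size at least $2(r-1)-2=2r-4$. Your inequality $2(r-2)\ge2r-2$ is false as written, though the conclusion holds because $|X|(r-|X|)\ge2r-4$.
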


\begin{proof}
Put $C_D:=5\beta+240$.  Suppose first that $r<4$.  Then $M\le2r\le6$.
If $M$ is even, every minimal parity-preclusion set $F$ has
$|F|\ge r\ge M/2$ by \eqref{eq:boundary-excess}.  There are at most
$2^{|E(R)|}\le2^{15}$ possible sets.  Hence
\[
        \mathbb P(R_p\text{ has no perfect matching})
        \le 2^{15}q^{M/2}
        \le N^{15-C_D/2}
        \le N^{-\beta}.
\]
If $M$ is odd, then the regularity makes $r$ even.  The assumptions force
$r=2$ and $M=3$, so $R=K_3$.  This graph remains factor-critical exactly
when all three edges survive.  Therefore
\[
        \mathbb P(R_p\text{ is not factor-critical})
        \le3q\le N^{2-C_D/3}\le N^{-\beta}.
\]

Assume that $r\ge4$.  If $R$ is irreducible, then
\[
        q^r=(q^M)^{r/M}\le N^{-C_D/2}
        \le N^{-(2\beta+90)},
\]
and the result holds by Proposition~\ref{prop:irr-random}.

Now suppose that $R$  is not irreducible.  Lemma~\ref{thm:block} gives a proper
block of size $s\ge r\ge M/2$.  Since there are at least two blocks,
$s\le M/2$.  Therefore
\[
        s=r=M/2.
\]
So there are exactly two blocks, and each induced block graph is $K_s$.

First suppose that $s\ge5$.  The graph $K_s$ is irreducible: if
$2\le|X|\le s-2$, then
\[
        |\partial_{K_s}X|=|X|(s-|X|)
        \ge2s-4=2(s-1)-2.
\]
Moreover,
\[
        q^{s-1}=(q^M)^{(s-1)/(2s)}
        \le N^{-2C_D/5}
        \le N^{-(2\beta+94)}.
\]
Apply Proposition~\ref{prop:irr-random}, with error exponent $\beta+2$, to
both copies of $K_s$.  Thus both random copies are parity-perfect except
with probability at most $2N^{-(\beta+2)}$.

If $s$ is even, the two internal perfect matchings form a perfect matching
of $R_p$.  Suppose that $s$ is odd.  The external edges form a perfect
matching between the two blocks, and the probability that all of them are
deleted is
\[
        q^s=(q^M)^{1/2}\le N^{-C_D/2}.
\]
If an external edge $xy$ survives, use factor-criticality to choose perfect
matchings inside the two blocks after deleting $x$ and $y$, and then add
$xy$.  The total failure probability is at most
\[
        2N^{-(\beta+2)}+N^{-C_D/2}\le N^{-\beta}.
\]

It remains to consider $s=4$.  For each copy of $K_4$, every minimal set
whose deletion destroys all perfect matchings has size at least three, and
there are at most $2^6$ such sets.  Consequently, the probability that one
of the two random copies has no perfect matching is at most
\[
        2^7q^3\le N^{7-3C_D/8}\le N^{-\beta}.
\]
The union of two internal perfect matchings then completes a perfect
matching of $R_p$.
\end{proof}

Expose the external edges of $G_p$.  Declare an edge of $Q$ open if at
least one of its $\ell$ original edges survives.  The open quotient is
$Q_\rho$, where
\begin{equation}\label{eq:quotient-rho}
        \rho:=1-q^\ell.
\end{equation}
Distinct quotient edges correspond to disjoint sets of original edges, so
the open events are independent.  By Lemma~\ref{lem:constant-mult},
\begin{equation}\label{eq:quotient-transform}
        (1-\rho)^k=(q^\ell)^k=q^{\ell k}=q^m\le q^d.
\end{equation}

\begin{proposition}\label{prop:reducible-lifting}
Let $\beta>0$.  Whenever $N\ge n$ and
\[
        q^d\le N^{-(5\beta+245)},
\]
the following hold.
\begin{enumerate}[label=\textup{(\roman*)}]
\item If $m$ is even, then
\[
        \mathbb P(G_p\text{ is not parity-perfect})\le N^{-\beta}.
\]
\item If $m$ is odd, then
\[
        \mathbb P(G_p\text{ is not parity-perfect})
        \le\mathbb P(Q_\rho\text{ has no perfect matching})+N^{-\beta}.
\]
\end{enumerate}
\end{proposition}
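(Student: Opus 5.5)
The plan is to apply Proposition~\ref{prop:dense-completion} inside each block and then assemble a global matching from the external edges. Each block $G[B]$ is a connected simple vertex-transitive graph, since $\operatorname{Aut}(G)_B$ acts transitively on $B$. It has order $m$ and degree $d-1$, and $d-1\ge m/2$ follows from $m\le 2d-3$ by \eqref{eq:block-size}.

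\textbf{Applying the dense completion.} Since $m\ge d$, we have $q^m\le q^d\le N^{-(5\beta+245)}=N^{-(5(\beta+1)+240)}$. Proposition~\ref{prop:dense-completion} with error exponent $\beta+1$ and ambient parameter $N\ge n\ge m$ then bounds the failure probability of a single block by $N^{-(\beta+1)}$. There are $n/m\le n\le N$ blocks, so a union bound shows that, except with probability at most $N^{-\beta}$, every random block $G_p[B]$ is parity-perfect. The internal edges of distinct blocks are disjoint from each other and from the external edges. The event that all blocks are parity-perfect therefore depends only on internal edges, and it is independent of the exposed external edges that determine $Q_\rho$.

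\textbf{Case (i): $m$ even.} Recall that $n$ is even in the reducible setting. When $m$ is even, each block has a perfect matching on the good event, and the union of these internal matchings is a perfect matching of $G_p$. This gives (i) directly.

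\textbf{Case (ii): $m$ odd.} Here each block is factor-critical on the good event, and the number of blocks is even. Suppose $Q_\rho$ has a perfect matching $M_Q$. For each quotient edge $\{B,B'\}\in M_Q$, which is open, pick one surviving original external edge $xy$ with $x\in B$ and $y\in B'$. Because the external edges of $G$ form a perfect matching, distinct chosen edges use distinct vertices. Each block is covered exactly once by $M_Q$, so exactly one vertex in each block is used. Factor-criticality then supplies a perfect matching of $G_p[B]-x$ for every block $B$ with its chosen vertex $x$. Together with the chosen external edges, these give a perfect matching of $G_p$. A union bound over the two failure events yields (ii).

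\textbf{Main obstacle.} The only real content is confirming the hypotheses of Proposition~\ref{prop:dense-completion}. These are the vertex-transitivity of $G[B]$, the inequality $d-1\ge m/2$, and the passage from $q^d$ to $q^m$, which uses $m\ge d$ and $q\le 1$. The exponent shift $5(\beta+1)+240=5\beta+245$ is what absorbs the union bound over the at most $N$ blocks. The rest is bookkeeping.
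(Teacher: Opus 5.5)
Your proposal is correct and follows the paper's proof essentially step by step. You apply Proposition~\ref{prop:dense-completion} to every block with error exponent $\beta+1$ (via $q^m\le q^d$ and $d-1>m/2$) and take a union bound over the at most $N$ blocks; you then assemble the perfect matching from the internal matchings when $m$ is even, or from a perfect matching of $Q_\rho$ together with factor-criticality of the blocks when $m$ is odd. Your remark on independence between internal and external edges is harmless but unnecessary, since only a union bound is used.
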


\begin{proof}
For every block $B_0\in\mathcal B$,  $G[B_0]$ is connected and
vertex-transitive under the setwise stabilizer of $B_0$.  It has order $m$
and degree $d-1$.  By \eqref{eq:block-size},
\[
        d-1\ge\frac{m+1}{2}>\frac m2.
\]
Also $q^m\le q^d$, and
\[
        5(\beta+1)+240=5\beta+245.
\]
Apply Proposition~\ref{prop:dense-completion} to every block with error
exponent $\beta+1$.  Since there are at most $n\le N$ blocks, the following
event $\mathcal E$ has probability at least $1-N^{-\beta}$: every random
block graph has a perfect matching when $m$ is even and is factor-critical
when $m$ is odd.

Suppose that $m$ is even. On $\mathcal E$, choose a perfect matching
$M_B$ of $G_p[B]$ for every $B\in\mathcal B$. Since the blocks partition
$V(G)$, the union
$
\bigcup_{B\in\mathcal B} M_B
$
is a perfect matching of $G_p$. This proves~(i).

Now suppose that $m$ is odd.  As observed above, the external edges of $G$
form a perfect matching.  Hence $n$ is even and the quotient order $n/m$ is
even.  Work on the intersection of $\mathcal E$ with the event that
$Q_\rho$ has a perfect matching.  For every edge of such a matching, choose
one surviving original external edge.  This matches one vertex in each
block.  Factor-criticality supplies a perfect matching of the remaining
vertices in every block, and the union is a perfect matching of $G_p$.
A union bound proves (ii).  Notice that an odd-order branch cannot occur.
\end{proof}

\subsection{Proof of Theorem~\ref{thm:main-poly}}

\begin{proof}[Proof of Theorem~\ref{thm:main-poly}]
Fix $A>0$ and put
\[
        \beta=A+1,
        \qquad
        C:=5A+250=5\beta+245,
        \qquad
        L(r)=1+\lfloor\log_3 r\rfloor\quad(r\ge2).
\]
We prove by strong induction on $n$ the stronger estimate
\begin{equation}\label{eq:induction-claim}
        \mathbb P(G_p\text{ is not parity-perfect})
        \le L(n)N^{-\beta}.
\end{equation}
Assume that \eqref{eq:induction-claim} holds for all smaller orders.

We first dispose of $d<4$.  If $d=1$, then $G=K_2$ and its unique edge
survives except with probability
\[
        q\le N^{-C}\le N^{-\beta}.
\]
If $d=2$, then $G=C_n$.  For even $n$, its two alternating perfect
matchings give
\[
\begin{aligned}
        \mathbb P(G_p\text{ has no perfect matching})
        =(1-p^{n/2})^2
        \le (nq/2)^2
         \le N^2q^2
         \le N^{2-C}
         \le N^{-\beta}.
\end{aligned}
\]
For odd $n$, the random cycle is factor-critical only if every edge
survives, and hence
\[
        \mathbb P(G_p\text{ is not factor-critical})
        =1-p^n\le nq
        \le N(q^2)^{1/2}
        \le N^{1-C/2}
        \le N^{-\beta}.
\]
Finally, if $d=3$, then $n$ is even.  Fix a perfect matching of $G$ using
Proposition~\ref{prop:host-parity-perfect}.  If all its edges survive, then
$G_p$ has a perfect matching. So
\[
        \mathbb P(G_p\text{ has no perfect matching})
        \le\frac n2q
        \le N(q^3)^{1/3}
        \le N^{1-C/3}
        \le N^{-\beta}.
\]
This proves \eqref{eq:induction-claim} when $d<4$.  Now we assume $d\ge4$.

If $G$ is irreducible, then
$C=5\beta+245\ge2\beta+90$, so
Proposition~\ref{prop:irr-random}, used with error exponent $\beta$, gives
failure probability at most $N^{-\beta}$.

Suppose that $G$  is not irreducible, and let $m$ be the block size given by
Lemma~\ref{thm:block}.  If $m$ is even,
Proposition~\ref{prop:reducible-lifting}(i) gives failure probability at most
$N^{-\beta}$.

It remains to treat odd $m$.  Since the block system is nontrivial,
$m\ge3$.  The simple quotient $Q$ is connected and vertex-transitive.  Its
order
\[
        n_Q=|V(Q)|=\frac nm
\]
is even and satisfies $n_Q\le n/3<n$.  Let $k$ be the
degree of $Q$, and let $\rho$ be defined by
\eqref{eq:quotient-rho}.  By \eqref{eq:quotient-transform},
\[
        (1-\rho)^k\le q^d\le N^{-C}.
\]
The induction hypothesis, with the same ambient parameter $N$, gives
\[
        \mathbb P(Q_\rho\text{ has no perfect matching})
        \le L(n_Q)N^{-\beta}.
\]
Proposition~\ref{prop:reducible-lifting}(ii) now yields
\[
        \mathbb P(G_p\text{ is not parity-perfect})
        \le \bigl(L(n_Q)+1\bigr)N^{-\beta}.
\]
Because $n_Q\le n/3$,
\[
        L(n_Q)+1
        \le L(n).
\]
This closes the induction.

Since $L(n)\le N$ for $N\ge n\ge2$, \eqref{eq:induction-claim} and
$\beta=A+1$
give
\[
        \mathbb P(G_p\text{ is not parity-perfect})
        \le N\,N^{-(A+1)}
        \le N^{-A}.
\]
\end{proof}

\section{Concluding remarks}\label{sec:scope}

The two results show that the isolated-vertex scale already forces the
matching and local degree structures contained in a Hamilton cycle.  The
remaining difficulty is global: one must connect the cycles of a
$2$-factor without losing the probability scale.

\subsection{Algorithmic and structural consequences}

The packing in Theorem~\ref{thm:packing} can be found in randomized
polynomial time.  Given $G_p$, orient each retained edge independently and
uniformly.  Construct the random bipartite graph $J$ on the double cover.
For each component of the double cover, use the standard max-flow algorithm for
the bipartite $f$-factor problem in
Lemma~\ref{lem:bipartite-factor}.  On the event in
Theorem~\ref{thm:directed-kernel}, the algorithm returns an $r$-factor.
After forgetting directions, orient Euler tours and decompose the resulting
regular bipartite graph into perfect matchings; this is the constructive
proof of Lemma~\ref{lem:even-factorization}.  The procedure produces the
full packing with the same success probability.

The packing argument uses vertex-transitivity only to obtain maximal edge
connectivity in the components of the double cover.  Thus it gives the
following extension.

\begin{proposition}\label{prop:double-cover-extension}
Let $G$ be a connected simple $n$-vertex $d$-regular graph such that every
 component of its bipartite double cover $\widehat G$ has
edge-connectivity $d$.  Then the conclusion of Theorem~\ref{thm:packing}
holds for $G$ with the same constants.
\end{proposition}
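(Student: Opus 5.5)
The plan is to rerun the proofs of Theorem~\ref{thm:directed-kernel} and Theorem~\ref{thm:packing} verbatim, checking at each step where vertex-transitivity was used. It was used only once: in the final sentence of Lemma~\ref{lem:double-cover}, to conclude that each component $\widehat G_0$ of $\widehat G$ has edge-connectivity $d$. The hypothesis of Proposition~\ref{prop:double-cover-extension} supplies exactly that conclusion. The other parts of Lemma~\ref{lem:double-cover} need only connectivity and $d$-regularity of $G$: $\widehat G$ is $d$-regular and bipartite, with one component if $G$ is nonbipartite and two if $G$ is bipartite.

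The first step is the kernel argument. Fix a component $\widehat G_0$ with sides $L_0,R_0$. Since it is $d$-regular and bipartite, $|L_0|=|R_0|$, so Lemma~\ref{lem:bipartite-factor} applies to $J_0$. The degree-counting identity \eqref{eq:cross-edge-difference} and the lower bounds \eqref{eq:cross-edge-lower-bounds} use only $d$-regularity of $\widehat G_0$. The grouping of cover edges by host edge, the mean computation \eqref{eq:flow-mean}, and the Chernoff estimate via Lemma~\ref{lem:chernoff} are purely probabilistic and do not involve the host structure.

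The union bound then needs Theorem~\ref{thm:karger} applied to $\widehat G_0$ with $\lambda=d$. This is precisely where the hypothesis enters. It gives fewer than $2(2n)^{2(j+1)}$ witness sets $U$ with $jd\le|\partial U|<(j+1)d$, and it also guarantees $j\ge1$ for every nonempty proper $U$. The numerical summation is unchanged, using $n\ge3$ as before, and the number of components is still at most two. Hence $\vec G_p$ has an $r$-regular directed kernel with probability at least $1-n^{-A}$.

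Finally, the deduction of $\nu_2(G_p)\ge r$ needs only simplicity of $G$ (so that the undirected kernel $H_D$ is a simple $2r$-regular graph), Lemma~\ref{lem:even-factorization}, and the coupling between $\vec G_p$ and $G_p$. None of these uses transitivity. The only point needing care is confirming that no other step implicitly relied on transitivity; in particular, the regularity of $\widehat G_0$ comes from $d$-regularity of $G$ rather than from symmetry. No genuine obstacle is expected.
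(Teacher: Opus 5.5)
Your proposal is correct and is essentially the paper's own proof. The paper likewise reruns the proof of Theorem~\ref{thm:directed-kernel} with the final assertion of Lemma~\ref{lem:double-cover} (edge-connectivity $d$ of each cover component) replaced by the hypothesis, noting that every other step, including the Karger count with $\lambda=d$ and the Petersen decomposition, is unchanged.
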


\begin{proof}
In the proof of Theorem~\ref{thm:directed-kernel}, replace the final
assertion of Lemma~\ref{lem:double-cover} by the assumed edge-connectivity
of the cover components.  Every other step is unchanged.
\end{proof}

In contrast, the parity-perfectness theorem uses vertex-transitivity both
in the obstruction count and in the recursive block decomposition.

\subsection{The isolated-vertex obstruction is not uniformly sharp}

The polynomial condition in Theorem~\ref{thm:main-poly} does not say that
isolated vertices are the only obstruction in every vertex-transitive graph.

\begin{proposition}\label{prop:cycle-exponent-obstruction}
For even cycles,
\begin{equation}\label{eq:cycle-probability}
        \mathbb P((C_{2m})_p\text{ has a perfect matching})
        =2p^m-p^{2m}.
\end{equation}
For odd cycles,
\begin{equation}\label{eq:odd-cycle-probability}
        \mathbb P((C_{2m+1})_p\text{ is factor-critical})
        =p^{2m+1}.
\end{equation}
In particular, if $q=1-p=c/m$ for a fixed $c>0$, then the probability in
\eqref{eq:cycle-probability} tends to
\[
        2e^{-c}-e^{-2c}<1,
\]
whereas the probability that $G_p$ has an isolated vertex tends to zero.
\end{proposition}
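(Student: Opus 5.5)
The plan is to compute both probabilities exactly, by identifying the perfect matchings and the factor-critical spanning subgraphs of a cycle, and then to pass to the limit.

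\emph{Even cycles.} The cycle $C_{2m}$ has exactly two perfect matchings, $M_1$ and $M_2$, the two alternating classes of $m$ edges each. Every spanning subgraph of $C_{2m}$ is also a subgraph of the cycle, so $(C_{2m})_p$ has a perfect matching if and only if $M_1$ survives entirely or $M_2$ survives entirely. These two events depend on disjoint edge sets, so each has probability $p^m$ and they are independent. Inclusion--exclusion then gives
\[
        2p^m-p^{2m},
\]
which is \eqref{eq:cycle-probability}.

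\emph{Odd cycles.} If every edge survives, $(C_{2m+1})_p=C_{2m+1}$, and this graph is factor-critical because deleting any vertex leaves a path on $2m$ vertices. Conversely, suppose some edge $e=xy$ is deleted. Deleting a vertex $z\notin\{x,y\}$ leaves a graph contained in $C_{2m+1}-z-e$. That graph is the disjoint union of two paths whose orders add up to $2m$. The position of $z$ can be chosen so that both paths have odd order. To do this, label the path $C_{2m+1}-e$ as $y=v_0,v_1,\dots,v_{2m}=x$ and take $z=v_1$ when $m\ge1$. Then the pieces are $\{v_0\}$ and $\{v_2,\dots,v_{2m}\}$, of orders $1$ and $2m-1$, both odd. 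A graph with an odd component has no perfect matching, so the percolated cycle is not factor-critical. This gives $p^{2m+1}$, which is \eqref{eq:odd-cycle-probability}.

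\emph{Asymptotics.} Put $q=c/m$. Then $p^m=(1-c/m)^m\to e^{-c}$, so the matching probability tends to $2e^{-c}-e^{-2c}$. This limit equals $1-(1-e^{-c})^2$, which is strictly less than $1$ because $c>0$. A fixed vertex is isolated exactly when both of its incident edges are deleted, which has probability $q^2$. A union bound over the $2m$ vertices gives
\[
        \mathbb P(\text{some vertex is isolated})\le 2mq^2=\frac{2c^2}{m}\to0.
\]

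The only step needing care is the choice of the removed vertex in the odd case, which makes both remaining path pieces odd. Everything else is direct computation.
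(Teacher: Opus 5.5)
Your proof is correct and follows the paper's argument: the two alternating perfect matchings with inclusion--exclusion for even cycles; the observation that losing any one edge destroys factor-criticality of the odd cycle, where you make the paper's claim that an odd path is not factor-critical explicit by deleting $v_1$ to isolate $v_0$; and $p^m\to e^{-c}$ together with $2mq^2=2c^2/m\to0$ for isolated vertices, your union bound being the paper's first-moment/Markov step. You also justify the strict inequality via $2e^{-c}-e^{-2c}=1-(1-e^{-c})^2$, which the paper leaves implicit.
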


\begin{proof}
The cycle $C_{2m}$ has exactly two perfect matchings, namely its two
alternating edge sets.  Each survives with probability $p^m$, and both
survive exactly when every edge survives, an event of probability $p^{2m}$.
Inclusion--exclusion gives \eqref{eq:cycle-probability}.

An odd cycle is factor-critical.  Deleting any one of its edges leaves an
odd path, which is not factor-critical; deleting further edges cannot
restore the property.  Hence the percolated odd cycle is factor-critical
exactly when every edge survives, proving
\eqref{eq:odd-cycle-probability}.

If $q=c/m$, then $p^m\to e^{-c}$.  A fixed vertex is isolated with
probability $q^2$, so the expected number of isolated vertices is
\[
        2mq^2=\frac{2c^2}{m}\longrightarrow0.
\]
Markov's inequality completes the proof.
\end{proof}

Thus a sharp threshold theorem must either assume that the degree grows or
include further low-degree matching-preclusion structures.

The cycle calculation also locates the possible optimal exponent.  With
$C_*(A)$ as defined after Theorem~\ref{thm:main-poly}, we have
\begin{equation}\label{eq:exponent-window}
        2A+2\le C_*(A)\le5A+250.
\end{equation}
Indeed, the upper bound is Theorem~\ref{thm:main-poly}.  For the lower
bound, take an odd $n$ and $G=C_n$, put $N=n$, and set
$q=n^{-C/2}$.  Then $q^d=n^{-C}$, whereas
\[
        \mathbb P(G_p\text{ is not factor-critical})
        =1-(1-q)^n.
\]
If $C<2A+2$, this probability is asymptotically larger than $n^{-A}$
(and it does not even tend to zero when $C\le2$).  Thus no smaller uniform
exponent is possible.  If one restricts to even-order hosts, the even-cycle
formula similarly gives the lower bound $C\ge A+2$.

\subsection{A deterministic cycle-lifting lemma}

Theorem~\ref{thm:packing} controls the degrees in each spanning factor, but
not its number of cycles.  A Hamilton cycle has one component, whereas the
max-flow inequalities impose only the local degree conditions of a cycle
cover.  The block decomposition also shows what is needed to lift a
Hamilton cycle.

\begin{lemma}\label{lem:hamilton-lifting}
Let $G$ have a block system $\mathcal B$ in which every vertex has exactly
one neighbour outside its block, and let $H\subseteq G$ be a spanning
subgraph.  Suppose first that the simple quotient on $\mathcal B$ has at
least three vertices.  Assume that
\begin{enumerate}[label=\textup{(\roman*)}]
\item The open quotient of $H$ contains a Hamilton cycle $Z$.
\item For every edge of $Z$, one corresponding edge of $H$ is chosen.
\item For every block $B$, the two chosen external edges incident with $B$
have distinct endpoints $x_B,y_B\in B$, and $H[B]$ has a Hamilton path
from $x_B$ to $y_B$.
\end{enumerate}
Then $H$ has a Hamilton cycle.

If the quotient has two vertices, the same conclusion holds when two
distinct external edges are chosen and each block has a Hamilton path
between the endpoints of those two edges.
\end{lemma}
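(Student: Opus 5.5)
The plan is a direct construction: traverse the quotient Hamilton cycle block by block, and inside each block replace the block by its Hamilton path. Write the Hamilton cycle of the open quotient as $Z=B_1B_2\cdots B_kB_1$ with $k\ge3$. By (ii), for each quotient edge $B_iB_{i+1}$ (indices modulo $k$) we have a chosen edge $e_i\in E(H)$. Its endpoints are $y_{B_i}\in B_i$ and $x_{B_{i+1}}\in B_{i+1}$; we relabel $x_B,y_B$ if necessary so that $y$ is the exit and $x$ the entry.

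We first record why these endpoints are well defined. Every vertex has exactly one neighbour outside its block, so the external edges of $G$ form a perfect matching. Hence the chosen external edges are pairwise vertex-disjoint, and each block $B_i$ is incident with exactly two chosen edges, namely $e_{i-1}$ and $e_i$, since $Z$ is a cycle of length $k\ge3$. Hypothesis (iii) says their endpoints in $B_i$ are distinct, and gives a Hamilton path $P_i$ of $H[B_i]$ from $x_{B_i}$ to $y_{B_i}$. The orientation of $P_i$ does not matter, because an undirected path can be reversed.

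The cycle is then the concatenation
\[
P_1\,e_1\,P_2\,e_2\cdots P_k\,e_k,
\]
which closes up because $e_k$ returns from $y_{B_k}$ to $x_{B_1}$. This is a closed walk in $H$. It visits every vertex of every block exactly once, and these blocks are all of $\mathcal B$ because $Z$ is Hamilton in the quotient, so they partition $V(G)$. Its edges are distinct: the internal edges lie in different blocks, and the $e_i$ are distinct external edges. So it is a Hamilton cycle of $H$. If some block has size $1$, then $x_B=y_B$ and (iii) cannot hold, so this case is excluded automatically.

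For the two-vertex quotient with blocks $B_1,B_2$, the quotient "cycle" degenerates. Instead we take the two distinct chosen external edges $e=x_1x_2$ and $f=y_1y_2$, where $x_i,y_i\in B_i$. They are disjoint because the external edges form a matching, so $x_i\ne y_i$. The hypothesis provides Hamilton paths $P_i$ from $x_i$ to $y_i$ in $H[B_i]$, and $P_1\,f\,P_2^{-1}\,e$ is the required Hamilton cycle.

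The only real obstacle is bookkeeping: making sure that the two chosen edges at each block are exactly the ones used by the path in that block, which follows from $Z$ being a cycle, and that no edge repeats. This is why $k\ge3$ is needed; for $k=2$ the two quotient edges coincide, and the separate hypothesis replaces them with two distinct parallel edges.
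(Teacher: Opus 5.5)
Your proposal is correct and follows essentially the same route as the paper: replace each block on $Z$ by its internal Hamilton path and join consecutive paths by the chosen external edges, with the two-block case closed up by the two distinct parallel external edges. Your observation that the external edges form a perfect matching, so distinct chosen edges have distinct endpoints, matches the remark the paper makes immediately after the lemma.
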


\begin{proof}
Replace each quotient vertex $B$ on $Z$ by a Hamilton path in $H[B]$ joining
the endpoints of the two incident chosen external edges.  The internal paths
are vertex-disjoint and cover $V(H)$.  The chosen external edges join them in
the cyclic order of $Z$.  Their union is therefore a spanning cycle.  The
two-block case is the same, with the two external edges joining the two
internal Hamilton paths.
\end{proof}

Because every vertex has only one external edge, distinct external edges
incident with a block have distinct endpoints.  The remaining random input
would be simultaneous Hamilton-connectedness inside the blocks and a
Hamiltonian quotient theorem stable under repeated passage to
vertex-transitive or Schreier quotients.  The obstruction count used here
does not provide an analogue of Tutte barriers for Hamiltonicity.

\subsection{Further questions}

\begin{problem}
Find natural degree conditions under which
\[
        nq^d\longrightarrow0
\]
implies parity-perfectness with high probability.
Proposition~\ref{prop:cycle-exponent-obstruction} shows that this does not
hold uniformly for bounded degree.
\end{problem}

Lemma~\ref{lem:hamilton-lifting} isolates one subproblem in the reducible
case.  The irreducible case and robust Hamilton-connectedness remain
separate obstacles.  Theorems~\ref{thm:main-poly} and~\ref{thm:packing}
prove two factor-theoretic consequences of
Conjecture~\ref{conj:random-hamilton}, but do not control the cycle-merging
step required for Hamiltonicity.

\begin{problem}
Close the gap in \eqref{eq:exponent-window}.  In particular, determine
whether the lower bound $C_*(A)=2A+2$ is sharp, and classify the minimal
parity-preclusion sets that determine the leading failure probability.
\end{problem}

\section*{Acknowledgement}
The authors acknowledge the use of AI tools during the exploratory stage of this project. All mathematical arguments and proofs in the final manuscript were checked and written by the authors.

\end{document}